\theoremstyle{plain}
\newtheorem{theorem}{Theorem}[section]
\newtheorem{lemma}[theorem]{Lemma}
\newtheorem{proposition}[theorem]{Proposition}
\newtheorem{corollary}[theorem]{Corollary}
\newtheorem{example}[theorem]{Example}
\theoremstyle{definition}
\newtheorem{definition}[theorem]{Definition}
\newtheorem{remark}[theorem]{Remark}
\begin{document}

\allowdisplaybreaks

\title{Regularity of aperiodic minimal subshifts}

\author[F.\ Dreher]{F.\ Dreher}
\address[F.\ Dreher]{Independent Scholar}
%
\author[M.\ Kesseb\"ohmer]{M.\ Kesseb\"ohmer}
\address[M.\ Kesseb\"ohmer]{FB 3 -- Mathematik, Universit\"at Bremen, Bibliothekstr. 1, 28359 Bremen, Germany}
%
\author[A.\ Mosbach]{A.\ Mosbach}
\address[A.\ Mosbach]{FB 3 -- Mathematik, Universit\"at Bremen, Bibliothekstr. 1, 28359 Bremen, Germany}
%
\author[T.\ Samuel]{T.\ Samuel}
\address[T.\ Samuel]{Mathematics Department, California Polytechnic State University, San Luis Obispo, CA, USA}
\email[Corresponding author]{ajsamuel@calpoly.edu}
\author[M.\ Steffens]{M.\ Steffens}
\address[M.\ Steffens]{FB 3 -- Mathematik, Universit\"at Bremen, Bibliothekstr. 1, 28359 Bremen, Germany}

\keywords{Aperiodic Order; Complexity; Subshifts; Grigorchuk Group; Unique Ergodicity}

\maketitle

\begin{abstract}
At the turn of this century Durand, and Lagarias and Pleasants established that key features of minimal subshifts (and their higher-dimensional analogues) to be studied are linearly repetitive, repulsive and power free.  Since then, generalisations and extensions of these features, namely \mbox{$\alpha$-repetitive}, \mbox{$\alpha$-repulsive} and \mbox{$\alpha$-finite} ($\alpha \geq 1$), have been introduced and studied.  We establish the equivalence of \mbox{$\alpha$-repulsive} and \mbox{$\alpha$-finite} for general subshifts over finite alphabets.  Further, we studied a family of aperiodic minimal subshifts stemming from Grigorchuk's infinite $2$-group $G$.  In particular, we show that these subshifts provide examples that demonstrate $\alpha$-repulsive (and hence $\alpha$-finite) is not equivalent to \mbox{$\alpha$-repetitive}, for $\alpha > 1$.  We also give necessary and sufficient conditions for these subshifts to be \mbox{$\alpha$-repetitive}, and \mbox{$\alpha$-repulsive} (and hence \mbox{$\alpha$-finite}).  Moreover, we obtain an explicit formula for their complexity functions from which we deduce that they are uniquely ergodic. 
\end{abstract}

\section{Introduction}

Aperiodic subshifts over finite alphabets play a vital role in various branches of mathematics, physics, and computer science.  The theory of aperiodic order is a relatively young field of mathematics, which has attracted considerable attention in recent years, see for instance \cite{AR:1991,BG:2013,BaMo:2000,BFMS:2002,FGJ:2015,KLS:2011,KS:2012,Mo:1997,MH:1940,P:1998,S:2015}.  It has grown rapidly over the past three decades; on the one hand, due to the experimental discovery of physical solid substances, called quasicrystals, exhibiting such features \cite{INF:1985,SBGC:1984}; and on the other hand, due to intrinsic mathematical interest in describing the very border between crystallinity and aperiodicity.  While there is no axiomatic framework for aperiodic order, various types of order conditions have been studied, see \cite{AR:1991,BG:2013,D:2000,BFMS:2002,FGJ:2015,HKW:2015,KS:2012,Lag:1999,Lag:2002,Lag:2003,MH:1940} and references therein.  In particular, through the work of Durand \cite{D:2000}, and Lagarias and Pleasants \cite{Lag:2003} it has become apparent that key features of aperiodic minimal subshifts (and their higher-dimensional analogues) to be studied are linearly repetitive, repulsive and power free.  Generalisations and extensions of these characteristics, namely $\alpha$-repetitive, $\alpha$-repulsive and $\alpha$-finite ($\alpha \geq 1$), were recently introduced in \cite{GKMSS16}.  Indeed, we have that $1$-repetitive is equialent to aperiodic and linearly repetitive, that $1$-repulsive implies repulsive, and that $1$-finite is equivalent to power free.  

For $\alpha \geq 1$, a subshift $Y$ which is $\alpha$-repetitive roughly means that the maximum return time (with respect to the left-shift map) of an infinite word in $Y$ to a cylinder set $U \subset Y$ generated by a finite word $u$ is of the order $\lvert u \rvert^\alpha$; $\alpha$-repulsive loosely means that if $W$ is a factor of an infinite word in $Y$ and if $w \neq W$ is a prefix and a suffix of $W$, then the overlap of these two appearances of $w$ in $W$ is at most of the order $\lvert w \rvert - \lvert w \rvert^{1/\alpha}$; and $\alpha$-finite roughly means that if $n$ is the largest natural number such that the $n$-fold concatenation of a finite word $u$ is a factor of an infinite word in $Y$, then $n$ is at most of the order $\lvert u \rvert^{\alpha -1}$.

In \cite{GKMSS16,KS:2012}, for Sturmian subshifts with slope $\theta$ and for $\alpha \geq 1$, it was shown that the characteristics $\alpha$-repetitive, $\alpha$-repulsive and $\alpha$-finite are equivalent. Here, links between regularity of spectral metrics built from noncommutative representations (spectral triples), aperiodic behaviour of the subshift and the Diophanitine properties of $\theta$ were obtained.

Here, we address the following question.  For an arbitrary subshift and for $\alpha \geq 1$, which of the order conditions $\alpha$-repetitive, $\alpha$-repulsive and $\alpha$-finite are equivalent?

We prove that, for $\alpha \geq 1$, a subshift is $\alpha$-repulsive if and only if it is $\alpha$-finite (\Cref{thm:equivalence}).  However, for $\alpha > 1$, we establish that $\alpha$-repetitive is not necessarily equivalent to $\alpha$-repulsive, and hence, nor $\alpha$-finite (\Cref{thm:G-alpha-free,thm:G-alpha-repetative}).  This latter result is provided by a class of subshifts  stemming  from  Grigorchuk's infinite $2$-group \mbox{$G$ -- the} first known group of intermediate growth introduced by Grigorchuk \cite{GRIGO:1984,G:84b}   (see also \cite{G:84a}, where a general class of groups, denoted by $G_{\omega}$, of intermediate growth is introduced). They have been studied, for instance, by Bon \cite{Bon201567}, Grigorchuk, Lenz and Nagnibeda \cite{GLN:16,GLN:16b}, and Lenz and Sell \cite{LenzSell:2016}.  These subshifts are determined by an infinite sequence $l = (l_{i})_{i \in \mathbb{N}}$ of natural numbers  and we  refer to them as $l$-Grigorchuk subshifts.

We show that $l$-Grigorchuk subshifts are aperiodic and minimal (\Cref{prop:minimality,cor:Gri-aperiodic}). Additionally, we establish necessary and sufficient conditions for these new subshifts to be $\alpha$-repetitive and $\alpha$-repulsive, and hence, $\alpha$-finite (\Cref{thm:G-alpha-free,thm:G-alpha-repetative}).  More precisely, we prove that an $l$-Grigorchuk subshift is $\alpha$-repulsive  (and hence $\alpha$-finite) if and only if
\begin{align*}
\limsup_{n \to \infty} \left\lvert l_{n+1} + (1 - \alpha) \sum_{i = 1}^{n} l_{i} \right\rvert <\infty,
\end{align*}
and that an $l$-Grigorchuk subshift is $\alpha$-repetitive if and only if 
\begin{align*}
\limsup_{n \to \infty} \left\lvert l_{n+2} + l_{n+1} + (1 - \alpha) \sum_{i = 1}^{n} l_{i} \right\rvert < \infty.
\end{align*}
We also obtain an explicit formula in terms of the sequence $l$ for the complexity function of an \mbox{$l$-Grigorchuk} subshift (\Cref{thm:complexity}), from which we are able to deduce that an \mbox{$l$-Grigorchuk} subshift is uniquely ergodic (\Cref{Cor:unique_ergodic}). Indeed, we show that there exist at most two and at least one right special word per length.  We would like to emphasis that, independently, Lenz and Sell \cite{LenzSell:2016} have obtained an explicit formula for the repetitive and complexity functions of an $l$-Grigorchuk subshift.  Moreover, they have also computed an explicit formula for the palindromic complexity function.  Further, in the case that $l$ is the constant one sequence, results concerning the complexity function have been obtained in \cite{GLN:16,GLN:16b}.

When $l$ is the constant one sequence, the resulting $l$-Grigorchuk subshift is intimately related to Lysenok group presentation of Grigorchuk's infinite $2$-group $G$.  By studying this subshift, very recently \cite{GLN:16,GLN:16b} the spectral type of the Laplacian on the Schreier graphs describing the action of Grigorchuk's infinite $2$-group $G$ on the boundary of the infinite binary rooted tree were determined and it has been shown that it is different in the isotropic and anisotropic cases.  In fact, the spectrum is shown to be a Cantor set of Lebesgue measure zero in the anisotropic case, whereas it consists of one or two intervals in the isotropic case.  Here (\Cref{sec:sec4}), we implicitly associate to a given $l$-Grigorchuk subshift a group, investigating properties of such groups and if the results of \cite{GLN:16,GLN:16b} can be extended to encompass our setting, we believe, would be a worthwhile and fruitful venture.

\subsection*{Outline} In the next section, we present key definitions and results concerning subshifts and define $\alpha$-repetitive, $\alpha$-repulsive and $\alpha$-finite.  In \Cref{sec:GR} we state and prove the equivalence of $\alpha$-repulsive and $\alpha$-finite for arbitrary subshifts over a finite alphabet.  We conclude with \Cref{sec:4}, which is divided into five parts.  The first part (\Cref{sec:sec4}) is concerned with introducing and defining $l$-Grigorchuk subshifts as well as stating some of their basic properties.  In \Cref{sec:section4.2,sec:section4.3} we provide necessary and sufficient conditions on a sequence $l$ which ensures that the associated $l$-Grigorchuk subshift is $\alpha$-repulsive (and hence $\alpha$-finite), and $\alpha$-repetitive respectively; after which, in \Cref{sec:examples}, we present several examples of sequences $l = (l_{n})_{n \in \mathbb{N}}$ for which the associated $l$-Grigorchuk subshift is \mbox{$\alpha$-repetitive}, and $\alpha$-repulsive (and hence $\alpha$-finite) for specific values of $\alpha$.  Here, we also show that if an $l$-Grigorchuk subshift is \mbox{$\alpha$-repulsive} and hence $\alpha$-finite, then it is $\alpha^{2}$-repetitive.  In our concluding part, \Cref{sec:section4.4}, we obtain an explicit formula for the complexity function (in terms of the sequence $l$) of an $l$-Grigorchuk subshift from which we deduce that any $l$-Grigorchuk subshift is aperiodic and uniquely ergodic.

\subsection*{Acknowledgements}

The authors would like to thank Daniel Lenz and Daniel Sell for bringing the problem to their attention.  The fourth author would like to thank  AG Dynamical Systems and Geometry at Universit\"at Bremen, Fakult\"at f\"ur Mathematik und Informatik at Friedrich-Schiller-Universit\"at Jena and Institut f\"ur Mathematik at Universit\"at zu L\"ubeck for hosting him and providing a stimulating research environment while working on this project.  The last author is grateful to the Institut f\"ur Mathematik at Universit\"at zu L\"ubeck for providing a stimulating working environment during the writing of this article.

\section{Preliminary Definitions}

Here, we review the key definitions of subshifts and define three notions of aperiodic order ($\alpha$-repetitive, $\alpha$-repulsive and $\alpha$-finite, for a given $\alpha \geq 1$) first introduced for Sturmian subshifts in \cite{GKMSS16}, and which generalise and extend the order conditions often referred to as linearly repetitive, repulsive and power free.

\subsection{Subshifts}\label{sec:subshiftt_intro}

Let $\mathscr{A}$ denote a set of $m \in \mathbb{N}$ symbols called the \textit{alphabet}.  For $n \in \mathbb{N}$ we define $\mathscr{A}^{n}$ to be the set of all finite words in the alphabet $\mathscr{A}$ of length $n$, and set
\begin{align*}
\mathscr{A}^{*} \coloneqq \bigcup_{n \in \mathbb{N}_{0}} \mathscr{A}^{n},
\end{align*}
where by convention $\mathscr{A}^{0}$ is the set containing only the \textit{empty word} $\varepsilon$. We denote by $\mathscr{A}^{\mathbb{N}}$ the set of all infinite words over the alphabet $\mathscr{A}$ and equip it with the discrete product topology.  The continuous map $\sigma \colon \mathscr{A}^{\mathbb{N}} \to \mathscr{A}^{\mathbb{N}}$ defined by $\sigma( x_{1}, x_{2}, \dots ) \coloneqq ( x_{2}, x_{3}, \dots )$ is called the \textit{left-shift}.  A closed set $Y \subseteq \mathscr{A}^{\mathbb{N}}$ which is left-shift invariant (that is $\sigma(Y) = Y$) is referred to as a \emph{subshift} and the tuple $(Y, \sigma)$ forms a dynamical system.  For an infinite word $x = (x_{n})_{n \in \mathbb{N}}$ over a finite alphabet $\mathscr{A}$, we set 
\begin{align*}
\Omega(x) \coloneqq \overline{\{ \sigma^{k}(x) \colon k \in \mathbb{N}_{0} \}},
\end{align*}
where the closure is taken with respect to the discrete product topology.  We call $\Omega(x)$ the \textit{subshift generated by} $x$.  For a subshift $Y$, the dynamical system $(Y, \sigma)$ is called \textit{minimal} if for all $y \in Y$ the set $\Omega(y)$ is dense in $Y$.  If $Y$ does not contain a periodic element (that is, an element $y$, such that there exists $k \in \mathbb{N}$ with $\sigma^{k}(y) = y$), then we call $Y$ \textit{aperiodic}.

For $w = (w_{1}, \dots, w_{k})$ and $v = (v_{1}, \dots, v_{n}) \in \mathscr{A}^{*}$, we set $w v \coloneqq (w_{1}, \dots, w_{k}, v_{1}, \dots, v_{n})$, that is the \textit{concatenation} of $w$ and $v$.  For $m \in \mathbb{N}$, we denote by $v^{m}$ the $m$-fold concatenation of $v$ with itself, namely
\begin{align*}
v^{m} \coloneqq \underbrace{v v \dots v}_{m - \text{times}}.
\end{align*}
Note that  $\mathscr{A}^{*}$ together with the operation of concatenation defines a monoid with identity element $\varepsilon$. The \textit{length} of $v$ is denoted by $\lvert v \rvert$ with $\lvert \varepsilon \rvert=0$ and, for $k \leq n$ a natural number, we set $v\lvert_{k} \coloneqq (v_{1}, v_{2}, \dots, v_{k})$.  We say that a word $u \in \mathscr{A}^{*}$ is a \textit{factor} of $v$ if there exists an integer $j$ with $u = \sigma^{j-1}(v)\lvert_{\lvert u \rvert}$.  We use the same notations when $v$ is an infinite word.  The integer $j$ is refereed to as an \textit{occurrence} of $u$ in $v$.

An infinite word $x$ over a finite alphabet $\mathscr{A}$ is called \textit{recurrent} if every factor has infinitely many different occurrences in $x$.  A \textit{gap} of a factor $u$ of $x$ is an integer $k$ which is a difference between two successive occurrences of $u$ in $x$.  We say that $x$ is \textit{uniformly recurrent} if $x$ is recurrent and for each factor $u$ of $x$ there exists an upper bound for the corresponding gaps. This is equivalent to the minimality of the corresponding subshift generated by $x$, see for instance \cite{Combinatorics:2010}.

The \textit{language $\mathcal{L}(Y)$ of a subshift $Y$} is the set of all factors of the elements of $Y$.  Similarly, we define the \textit{language $\mathcal{L}(x)$ of an infinite word $x$} to be the set of all factors of $x$.  Notice, the language of $\Omega(x)$ of an infinite word $x$ is equal to the language of $x$, namely $\mathcal{L}(\Omega(\nu)) = \mathcal{L}(\nu)$.  Following convention, the empty word $\varepsilon$ is assumed to be contained in every language.  For $s \geq 2$, we call $w = (w_{1}, \dots, w_{k}) \in \mathcal{L}(Y)$ \textit{$s$-right special} if the cardinality of the set \mbox{$\{ a \in \mathscr{A} \colon (w_{1}, \dots, w_{k}, a) \in \mathcal{L}(Y) \}$} is equal to $s$.  A word is called \textit{right special} if it is $s$-right special for some $s \geq 2$.

\subsection{Notions of aperiodic order}\label{sec:aperiodic_order_intro}

We begin by stating the definition of $\alpha$-repetitive, first defined in \cite{GKMSS16} for Sturmian subshifts, which generalises the concept of linearly repetitive.

\begin{definition}\label{defn:rep_fun}
The \textit{repetitive function} $R \colon \mathbb{N} \to \mathbb{N}$ of a subshift $Y$ assigns to $r$ the smallest $r'$ such that any element of $\mathcal{L}(Y)$ with length $r'$ contains (as factors) all elements of $\mathcal{L}(Y)$ with length $r$.
\end{definition}

\begin{definition}\label{defn:repetitive}
Let $\alpha \geq 1$ be given and set
\begin{align*}
R_{\alpha} \coloneqq \limsup_{n \to \infty} \frac{R(n)}{n^{\alpha}}.
\end{align*}
A subshift $Y$ is called \textit{$\alpha$-repetitive} if $R_{\alpha}$ is finite and non-zero.
\end{definition}

\begin{remark}\label{rmk:1=linear}
If $1 \leq \alpha < \beta$ and $0 < R_{\beta} < \infty$, then $R_{\alpha} = \infty$.  Similarly, if $0 < R_{\alpha} < \infty$, then $R_{\beta} = 0$.  Also, recall that a subshift $Y$ is said to be \text{linearly repetitive}, if and only if, there exists a positive constant $K$, such that $R(n) \leq K n$, for all $n \in \mathbb{N}$.  Since aperiodicity of a subshift guarantees that the number of words of length $n$ is strictly greater than $n$, for all $n\in\mathbb{N}$, see for instance \cite{F:2002}, this yields that linearly repetitive and $1$-repetitive are equivalent for aperiodic subshifts.
\end{remark}

Next, for $\alpha \geq 1$, we state the definition of $\alpha$-repulsive, which generalises the notion of repulsive.  We recall that a subshift $Y$ is called \emph{repulsive} if the value
\begin{align*}
\ell \coloneqq \inf \left\{ \frac{\lvert W \rvert - \lvert w \rvert}{\lvert w \rvert} \colon w, W \in \mathcal{L}(Y), w \; \text{is a prefix and suffix of} \; W, \; \text{and} \; W \neq w \neq \varepsilon \right\}
\end{align*}
is non-zero.

\begin{definition}\label{defn:repulsive}
Let $\alpha \geq 1$ be given.  For a subshift $Y$ set 
\begin{align*}
\ell_{\alpha} \coloneqq \liminf_{n \to \infty} A_{\alpha, n},
\end{align*}
where for a given natural number $n \geq 2$
\begin{align*}
A_{\alpha, n} \coloneqq \inf \left\{ \frac{\lvert W \rvert - \lvert w \rvert}{\lvert w \rvert^{1/\alpha}} \colon w, W \in \mathcal{L}(Y), w \; \text{is a prefix and suffix of} \; W, \; \lvert W \rvert = n \; \text{and} \; W \neq w \neq \varepsilon \right\},
\end{align*}
and if $\ell_{\alpha}$ is finite and non-zero, then we say that $Y$ is \textit{$\alpha$-repulsive}.
\end{definition}

\begin{remark}\label{rmk:repulsive_unique}
Notice that, if $1 \leq \alpha < \beta$ and $0 < \ell_{\beta} < \infty$, then $\ell_{\alpha} = 0$.  Similarly, if $0 < \ell_{\alpha} < \infty$, then $\ell_{\beta} = \infty$.
\end{remark}

The next definition is a generalisation of the notion of a subshift being power free.  If $\alpha = 1$, then $1$-finite is equivalent to the property of being power free.

\begin{definition}\label{defn:free}
For a subshift $Y$ and for $n \in \mathbb{N}$ set
\begin{align*}
Q(n) \coloneqq \sup \{ p \in \mathbb{N} \colon \text{there exists} \; W \in \mathcal{L}(Y) \; \text{with} \; \lvert W \rvert = n \; \text{and} \; W^{p} \in \mathcal{L}(Y) \}.
\end{align*}
Let $\alpha \geq 1$ be given.  We say that the subshift $Y$ is \textit{$\alpha$-finite} if the value
\begin{align*}
Q_{\alpha} \coloneqq \limsup_{n \to \infty} \frac{Q(n)}{n^{\alpha - 1}}
\end{align*}
is non-zero and finite.  Also, for ease of notation, for a given word $v \in \mathcal{L}(Y)$, we let $Q(v)$ denote the largest integer $p$ such that $v^{p} \in \mathcal{L}(Y)$, in the case that no such $p$ exists, we set $Q(v) = \infty$.
\end{definition}

\begin{remark}\label{rmk:1=powerfree}
If $1 \leq \alpha < \beta$ and $0 < Q_{\beta} < \infty$, then $Q_{\alpha} = \infty$.  Similarly, if $0 < Q_{\alpha} < \infty$, then $Q_{\beta} = 0$.
\end{remark}

To conclude this section, we state the definition of the complexity function.

\begin{definition}
For a subshift $Y$, we define the \textit{complexity function} $p \colon \mathbb{N} \to \mathbb{N}$ of $Y$ by
\begin{align*}
p(n) \coloneqq   \operatorname{card}\{ w \in \mathcal{L}(Y) \colon \lvert w \rvert = n \} .
\end{align*}
\end{definition}

\section{General Results}\label{sec:GR}

\begin{theorem}\label{thm:equivalence}
For $\alpha \geq 1$ and $x$ an infinite word over a finite alphabet, we have that $\Omega(x)$ is $\alpha$-repulsive if and only if it is $\alpha$-finite.
\end{theorem}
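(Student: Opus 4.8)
The plan is to build a dictionary between the \emph{borders} of factors (prefixes that are also suffixes) and the high powers occurring in $\mathcal{L}(Y)$, and then to transport this dictionary through the definitions of $\ell_{\alpha}$ and $Q_{\alpha}$. The elementary fact underpinning everything is the classical correspondence between borders and periods, which I would record as a preliminary lemma: if $W \in \mathcal{L}(Y)$ has a proper border $w$ (so $\varepsilon \neq w \neq W$ is both a prefix and a suffix of $W$), then, writing $n = \lvert W\rvert$, $k = \lvert w\rvert$ and $q = n-k$, the word $W$ has period $q$, so that the length-$q$ prefix $u := W\lvert_{q}$ satisfies $u^{\lfloor n/q\rfloor} \in \mathcal{L}(Y)$, whence $Q(q) \geq \lfloor n/q\rfloor$. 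Conversely, if $u^{p} \in \mathcal{L}(Y)$ with $\lvert u\rvert = q$ and $p \geq 2$, then $W := u^{p}$ has the proper border $w := u^{p-1}$, with $\lvert W\rvert - \lvert w\rvert = q$ and $\lvert w\rvert = (p-1)q$.

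The second ingredient is a quantitative translation. For a border as above the repulsive ratio is $\tfrac{\lvert W\rvert-\lvert w\rvert}{\lvert w\rvert^{1/\alpha}} = \tfrac{q}{k^{1/\alpha}}$, while the associated power contributes at least $\tfrac{\lfloor n/q\rfloor}{q^{\alpha-1}}$ to $Q_{\alpha}$. Since $k = n-q$ and $\lfloor n/q\rfloor$ is comparable to $n/q$ whenever the power is at least two, one has $k \asymp q\lfloor n/q\rfloor$, and therefore
\[
\left( \frac{q}{k^{1/\alpha}} \right)^{-\alpha} = \frac{k}{q^{\alpha}} \asymp \frac{\lfloor n/q\rfloor}{q^{\alpha-1}},
\]
with absolute implied constants. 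In words: the repulsive ratio raised to the power $-\alpha$ is comparable to the finiteness quotient, a small overlap ratio corresponding to a large power and conversely. This is precisely why one expects $Q_{\alpha}$ and $\ell_{\alpha}^{-\alpha}$ to be comparable, and in particular why the property of being \emph{finite and non-zero} should transfer in both directions.

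With the dictionary in place I would deduce the four one-sided implications needed for the equivalence, each by contraposition after extracting a subsequence. If $\ell_{\alpha} > 0$, fix $c, N$ with $\tfrac{\lvert W\rvert-\lvert w\rvert}{\lvert w\rvert^{1/\alpha}} \geq c$ for every border with $\lvert W\rvert \geq N$; feeding into this the borders produced by the converse construction bounds every power, giving $Q(q) \leq 1 + q^{\alpha-1}/c^{\alpha}$ for large $q$, hence $Q_{\alpha} \leq c^{-\alpha} < \infty$. If $\ell_{\alpha} < \infty$, a sequence of borders of bounded ratio yields, via $Q(q) \geq \lfloor n/q\rfloor$, lengths $q \to \infty$ along which $\tfrac{Q(q)}{q^{\alpha-1}}$ stays bounded below, so $Q_{\alpha} > 0$. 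The two reverse implications ($Q_{\alpha} < \infty \Rightarrow \ell_{\alpha} > 0$ and $Q_{\alpha} > 0 \Rightarrow \ell_{\alpha} < \infty$) are symmetric: the first turns a hypothetical sequence of borders with overlap ratio tending to $0$ into arbitrarily high powers, and the second turns the powers witnessing $Q_{\alpha} > 0$ into borders of bounded ratio. Combining the four implications gives $0 < \ell_{\alpha} < \infty \iff 0 < Q_{\alpha} < \infty$, which is exactly the asserted equivalence of $\alpha$-repulsive and $\alpha$-finite.

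The step I expect to be the main obstacle is the bookkeeping in passing between the two index sets: the quantity $A_{\alpha,n}$ is indexed by the length $n = \lvert W\rvert$ of the overlapping word, whereas $\tfrac{Q(q)}{q^{\alpha-1}}$ is indexed by the period $q = \lvert W\rvert - \lvert w\rvert$, and these differ by the a priori unbounded factor $\lfloor n/q\rfloor$. Keeping the floor functions, the comparability constants, and the interchange of $\liminf_{n}$ with $\limsup_{q}$ under control, so that \emph{finite and non-zero} genuinely transfers, is the crux; one must also check that the extremal configurations (longest borders, equivalently shortest periods) may be used without loss, and dispose of the degenerate possibilities that $q$ remains bounded along a subsequence, which, paired with powers tending to infinity, would produce a periodic point and force $Q_{\alpha} = \infty$, together with the boundary value $\alpha = 1$, where several quotients become automatically finite or non-zero and the implications must be read accordingly.
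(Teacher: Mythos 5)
Your dictionary between borders and powers is exactly the engine of the paper's own proof: the computation behind your implication (a)/(d) (a power $u^{p}$ yields the border $u^{p-1}\subset u^{p}$ with ratio $q^{1-1/\alpha}/(p-1)^{1/\alpha}$) is the paper's first display, and your implication (c) (a border with overlap more than half forces the decomposition into a high power of the period word, with the degenerate case of bounded period producing a periodic point and $Q_{\alpha}=\infty$) is precisely the paper's second half with the extremal pairs $V_{(n)},v_{(n)}$ and the words $u_{(k)},z_{(k)}$; the paper then outsources the converse direction ($\alpha$-finite $\Rightarrow$ $\alpha$-repulsive) to the proof of (3) $\Rightarrow$ (2) in \cite[Theorem 3.4]{GKMSS16}, which you instead reconstruct symmetrically. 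For $\alpha>1$ I checked your four implications, constants and subsequence bookkeeping, and they all go through (in (b) and (d) the hypothesis forces the relevant powers to be at least $2$ eventually, so the dictionary is always available).

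There is, however, a genuine gap at the endpoint $\alpha=1$, in your implication (d), $Q_{\alpha}>0\Rightarrow\ell_{\alpha}<\infty$. Since $Q(n)\geq 1$ for every $n$, $Q_{1}=\limsup_{n}Q(n)$ is an integer $\geq 1$, and $Q_{1}>0$ is automatic; an automatic hypothesis cannot imply $\ell_{1}<\infty$, so (d) as stated is not provable at $\alpha=1$ by any argument, and your closing remark that at $\alpha=1$ ``several quotients become automatically finite or non-zero'' cuts against you rather than for you. If $Q_{1}\geq 2$ your scheme survives: squares $u^{2}\in\mathcal{L}(Y)$ of unbounded length give borders of ratio $1$, hence $\ell_{1}\leq 1$. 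But if $Q_{1}=1$ --- e.g.\ the subshift of a square-free ternary word, which is $1$-finite since $Q_{1}=1\in(0,\infty)$ --- your dictionary produces no borders at all: there are no nontrivial powers to convert. The needed conclusion $\ell_{1}<\infty$ then amounts to the assertion that the word admits, for some fixed $C$ and infinitely many lengths, a factor with a border of length at least $n/(1+C)$, equivalently arbitrarily long factors of exponent bounded away from $1$; this is a statement about the asymptotic critical exponent of square-free words which simply does not follow from the border--period correspondence plus power-freeness, and requires a separate argument (for concrete examples such as the ternary Thue--Morse word it holds because repetitions of near-critical exponent propagate under the substitution to all scales, but that uses specific structure). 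You should either supply such an argument, restrict (d) to the case $Q_{\alpha}$ unbounded in $n^{\alpha-1}$ (i.e.\ $\alpha>1$ or $Q_{1}\geq 2$), or do as the paper does and route this one direction through the cited proof in \cite{GKMSS16} --- noting that the paper's blanket claim that that proof ``holds for arbitrary subshifts and for $\alpha=1$'' glosses over exactly the same corner case.
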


\begin{proof}
Let $\alpha \geq 1$ be fixed and let $\Omega(x)$ be $\alpha$-repulsive.  Suppose that $Q_{\alpha} = \infty$.  In this case there exist sequences of natural numbers $(n_{k})_{k \in \mathbb{N}}$ and $(p_{k})_{k \in \mathbb{N}}$ satisfying 
\begin{enumerate}[itemsep=0.1em,topsep=-0.25em,label=(\roman*)]
\item $(n_{k})_{k \in \mathbb{N}}$ is increasing with $p_{k}n_{k}^{1 - \alpha} > k$, and 
\item there exists $W_{(k)} \in \mathcal{L}(x)$ with $\lvert W_{(k)} \rvert = n_{k}$ and $W_{(k)}^{p_{k}} \in \mathcal{L}(x)$.
\end{enumerate}
Thus, we have that $p_{k} > 1$, for all $k$ sufficiently large.  Since $W_{(k)}^{p_{k}-1}$ is a prefix and a suffix of $W_{(k)}^{p_{k}}$ we have that
\begin{align*}
 \frac{\lvert W_{(k)}^{p_{k}} \rvert - \lvert W_{(k)}^{p_{k}-1} \rvert}{{\lvert {W_{(k)}^{p_{k}-1}} \rvert}^{1/\alpha}}
 =\frac{\lvert W_{(k)} \rvert}{{\lvert W_{(k)} \rvert}^{1/\alpha}{(p_{(k)}-1)}^{1/\alpha}}
 =\frac{n_k}{{n_k}^{1/\alpha}(p_{k}-1)^{1/\alpha}}
 \leq\frac{2^{1/\alpha} {n_k}^{(\alpha-1)/\alpha}}{{p_{k}}^{1/\alpha}}
 <\frac{2^{1/\alpha}}{k^{1/\alpha}},
\end{align*}
for all $k$ sufficiently large.  Therefore, we have that $\ell_\alpha=0$.

Suppose that $Q_{\alpha} = 0$. For $n \in \mathbb{N}$ let $V_{(n)}, v_{(n)} \in \mathcal{L}(x)$ be such that $\lvert V_{(n)} \rvert = n$, $v_{(n)} \neq V_{(n)}$ is a prefix and suffix of $V_{(n)}$ and
\begin{align*}
\frac{\lvert V_{(n)} \rvert - \lvert v_{(n)} \rvert}{{ \lvert v_{(n)} \rvert^{\frac{1}{\alpha}}}} = A_{\alpha , n}.
\end{align*}
Since $0 < \ell_\alpha < \infty$, this means that there exists a sequence $(n_{k})_{k \in \mathbb{N}}$ of natural numbers such that $2 \lvert v_{(n_{k})} \rvert > \lvert V_{(n_k)} \rvert$, for all $k \in \mathbb{N}$.  Thus, for each $k \in \mathbb{N}$, there exists a $q_{k} \geq 2$ such that
\begin{align*}
v_{(n_{k})} = \underbrace{u_{(k)} u_{(k)} \cdots u_{(k)}}_{q_{k}-1} z_{(k)} \quad \text{and} \quad V_{(n_{k})} = \underbrace{u_{(k)} u_{(k)} \cdots u_{(k)}}_{q_{k}} z_{(k)},
\end{align*}
where $u_{({k})}, z_{({k})} \in \mathcal{L}(x)$ with $0 < \lvert z_{(k)} \rvert < \lvert u_{(k)} \rvert$.  Hence, it follows that
\begin{align}\label{eq:MT100816}
\left( \frac{\lvert V_{(n_{k})} \rvert - \lvert v_{(n_{k})} \rvert}{{\lvert v_{(n_{k})} \rvert^{\frac{1}{\alpha}}}} \right)^\alpha
 =  \frac{(\lvert V_{(n_{k})} \rvert - \lvert v_{(n_{k})} \rvert)^\alpha}{\lvert v_{(n_{k})} \rvert}
 \geq \frac{\lvert u_{(k)} \rvert^\alpha}{q_{k} \lvert u_{(k)} \rvert}
 =\frac{\lvert u_{(k)} \rvert^{\alpha -1 }}{q_{k}}
 \geq \frac{\lvert u_{(k)} \rvert^{\alpha -1}}{Q( u_{(k)} )}
 \geq \frac{\lvert u_{(k)} \rvert^{\alpha -1}}{Q( \lvert u_{(k)} \rvert )},
\end{align}
where the lengths of the $u_{(k)}$ are unbounded, as otherwise $\limsup_{k\to\infty} Q(u_{(k)}) = \infty$. 
However, since by assumption $Q_{\alpha} =0$, we have
\begin{align*}
\liminf_{n \to \infty} {\frac{n^{\alpha - 1}}{Q(n)}}=\infty.
\end{align*}
This together with \eqref{eq:MT100816} yields that $\ell_{\alpha} = \infty$.

The reverse direction follows from the proof of (3) $\Rightarrow$ (2) in \cite[Theorem 3.4]{GKMSS16}.  We note that the statement of \cite[Theorem 3.4]{GKMSS16} is in terms of Sturmian subshifts and it is assumed that $\alpha > 1$, however, the proof of (3) $\Rightarrow$ (2) holds for arbitary subshifts and for $\alpha =1$.
\end{proof} 

\begin{proposition}\label{lem:Lemma2}
Let $\alpha \geq 1$ be given and let $x$ denote an infinite word over a finite alphabet.  If $\Omega(x)$ is $\alpha$-repulsive, or equivalently $\alpha$-finite, then it is aperiodic.
\end{proposition}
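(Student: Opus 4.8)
The plan is to prove the contrapositive and to route the argument through the notion of $\alpha$-finiteness. By \Cref{thm:equivalence}, for $\alpha \geq 1$ the subshift $\Omega(x)$ is $\alpha$-repulsive if and only if it is $\alpha$-finite, so it suffices to show that a non-aperiodic $\Omega(x)$ cannot be $\alpha$-finite; concretely, I would show that the presence of a periodic element forces $Q_{\alpha} = \infty$.

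First I would unpack what a periodic element supplies. Suppose $\Omega(x)$ is not aperiodic, so there exist $y \in \Omega(x)$ and $k \in \mathbb{N}$ with $\sigma^{k}(y) = y$. Writing $u \coloneqq y\lvert_{k}$, periodicity forces $y = u u u \cdots$, whence $u^{p}$ is a factor of $y$ for every $p \in \mathbb{N}$. Since $\mathcal{L}(\Omega(x)) = \mathcal{L}(x)$, this yields $u^{p} \in \mathcal{L}(x)$ for all $p \in \mathbb{N}$.

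The key step is to convert this into the behaviour of the power function $Q$ along an infinite sequence of lengths. For each $m \in \mathbb{N}$ the word $u^{m}$ has length $mk$ and satisfies $(u^{m})^{p} = u^{mp} \in \mathcal{L}(x)$ for all $p$, so no largest power exists and hence $Q(mk) = \infty$ by the convention in \Cref{defn:free}. Evaluating the quotient defining $Q_{\alpha}$ along the subsequence $n = mk \to \infty$ then gives $Q(n)/n^{\alpha - 1} = \infty$, and therefore $Q_{\alpha} = \limsup_{n \to \infty} Q(n)/n^{\alpha - 1} = \infty$. Thus $\Omega(x)$ is not $\alpha$-finite, and by \Cref{thm:equivalence} not $\alpha$-repulsive, which is the contrapositive of the claim.

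The one point requiring care, and the only genuine obstacle to a naive argument, is that $Q_{\alpha}$ is a limit superior as $n \to \infty$: producing a single length $k$ with $Q(k) = \infty$ would not by itself force $Q_{\alpha} = \infty$. It is precisely the periodicity that lets one manufacture arbitrarily large lengths $mk$ at which the power function is infinite, and this is what makes the limsup blow up.
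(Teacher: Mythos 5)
Your proposal is correct and takes essentially the same route as the paper: the paper also argues the contrapositive, observing that a periodic element $y$ with $\sigma^{k}(y) = y$ forces $Q(nk) = \infty$ for all $n \in \mathbb{N}$, hence $Q_{\alpha} = \infty$ for every $\alpha \geq 1$, so $\Omega(x)$ is not $\alpha$-finite (and, by \Cref{thm:equivalence}, not $\alpha$-repulsive). Your closing remark about needing infinitely many lengths $mk$, not just one, is precisely the point compressed into the paper's phrase ``for all $n \in \mathbb{N}$''.
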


\begin{proof}
We show the contrapositive. Suppose that there exists a $y \in \Omega(x)$ such that $\sigma^{k}(y) = y$, for some $k \in \mathbb{N}$.  This implies that $Q( n k ) = \infty$, for all $n \in \mathbb{N}$, and so, for all $\alpha \geq 1$ we have that $Q_\alpha = \infty$.  Therefore, the subshift $\Omega(x)$ is not $\alpha$-finite for any \mbox{$\alpha \geq 1$}.
\end{proof}

\begin{proposition}\label{Prop:lowerbound}
For an aperiodic subshift $Y$ we have that $R(n) > n Q(n)$, for all $n\in\mathbb{N}$.
\end{proposition}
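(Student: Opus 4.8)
The plan is to establish the strict inequality $R(n) > n\,Q(n)$ by exhibiting a single factor of length roughly $n\,Q(n)$ which fails to contain all words of length $n$, thereby forcing the repetitive function $R(n)$ to exceed this value. First I would fix $n \in \mathbb{N}$ and choose a word $W \in \mathcal{L}(Y)$ with $\lvert W \rvert = n$ realising the supremum in the definition of $Q(n)$, so that with $p \coloneqq Q(n)$ we have $W^{p} \in \mathcal{L}(Y)$. Here aperiodicity is essential: by \Cref{lem:Lemma2}-type reasoning (periodicity would force $Q(n) = \infty$), aperiodicity guarantees $Q(n)$ is finite, so $p$ is a well-defined natural number and $W^{p+1} \notin \mathcal{L}(Y)$.

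The key observation is that the word $W^{p}$, which has length $np = n\,Q(n)$, cannot contain \emph{every} word of length $n$ as a factor. Indeed, consider the word $\widetilde{W} \coloneqq \sigma(W)\lvert_{n-1}\, w_{1}$ obtained by cyclically rotating $W$ by one position (equivalently, any nontrivial cyclic rotation of $W$). I would argue that $\widetilde{W}$, or some word of length $n$ that is forced to exist in $\mathcal{L}(Y)$ by aperiodicity, is absent from the factors of $W^{p}$. The point is that the only length-$n$ factors occurring inside $W^{p}$ are precisely the cyclic rotations of $W$; if \emph{all} of these rotations coincide with $W$ itself, then $W$ is a power of a shorter word and aperiodicity together with the maximality of $p$ yields a contradiction, whereas if $\mathcal{L}(Y)$ contains a length-$n$ word that is not a cyclic rotation of $W$—which aperiodicity supplies, since by \Cref{rmk:1=linear} the number of length-$n$ words strictly exceeds $n \geq$ the number of distinct rotations of $W$—then that word is the desired witness.

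Once such a witness word $u \in \mathcal{L}(Y)$ of length $n$ is produced with $u$ not a factor of $W^{p}$, the definition of $R$ closes the argument: since $W^{p}$ has length $n\,Q(n)$ yet misses the length-$n$ factor $u$, no word of length $n\,Q(n)$ is guaranteed to contain all length-$n$ factors, hence $R(n) > n\,Q(n)$ as claimed. I expect the main obstacle to be the second step, namely the clean verification that $W^{p}$ omits a length-$n$ word from $\mathcal{L}(Y)$; one must handle carefully the case where $W$ is itself periodic under cyclic rotation (so that its rotations do not exhaust $n$ distinct words) and invoke the counting bound from \Cref{rmk:1=linear}—that aperiodicity forces $p(n) > n$—to guarantee the existence of a length-$n$ word outside the set of at most $n$ cyclic rotations of $W$. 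This counting fact is what ultimately drives the strictness of the inequality.
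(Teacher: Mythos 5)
Your proposal is correct and is essentially the paper's own argument: the length-$n$ factors of $W^{Q(n)}$ are among the at most $n$ cyclic rotations of $W$, while aperiodicity forces $p(n) > n$ (the counting fact recalled in \Cref{rmk:1=linear}), so the word $W^{Q(n)}$ of length $n\,Q(n)$ misses some length-$n$ word of $\mathcal{L}(Y)$ and hence $R(n) > n\,Q(n)$. The initial detour through a specific rotation $\widetilde{W}$ is unnecessary, but the argument you settle on is exactly the one in the paper, with the added (harmless, and in fact welcome) observation that aperiodicity guarantees $Q(n) < \infty$ so that the supremum is attained.
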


\begin{proof}
Recall that aperiodicity of a subshift guarantees that the number of words of length $n$ is strictly greater than $n$, for all $n\in\mathbb{N}$, see for instance \cite{F:2002}.

Let $n \in \mathbb{N}$ be fixed.  Let $w \in \mathcal{L}(Y)$ be such that $\lvert w \rvert = n$ and $w^{Q(n)} \in \mathcal{L}(Y)$.  The word $w^{Q(n)}$ has at most $n$ different factors of length $n$.  Thus, since $\lvert w^{Q(n)} \rvert = n Q(n)$ and since $\mathcal{L}(Y)$ is aperiodic, we have that $R(n) > n Q(n)$.
\end{proof}

\begin{corollary}\label{cor:R<Q}
For an aperiodic subshift $Y$ and for $\alpha \geq 1$, we have that $R_{\alpha} \geq Q_{\alpha}$.  In particular, $R_{\alpha} = 0$ implies $Q_{\alpha} = 0$ and $Q_{\alpha} = \infty$ implies $R_{\alpha} = \infty$.
\end{corollary}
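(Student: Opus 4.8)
The plan is to deduce this directly from \Cref{Prop:lowerbound} by dividing through by $n^{\alpha}$ and passing to the limit superior. First I would fix $\alpha \geq 1$ and recall that the proposition supplies the pointwise bound $R(n) > n\, Q(n)$ for every $n \in \mathbb{N}$. Dividing both sides of this inequality by $n^{\alpha}$ gives
\begin{align*}
\frac{R(n)}{n^{\alpha}} > \frac{n\, Q(n)}{n^{\alpha}} = \frac{Q(n)}{n^{\alpha - 1}}
\end{align*}
for all $n \in \mathbb{N}$, so the two defining quotients of $R_{\alpha}$ and $Q_{\alpha}$ are already comparable term by term.

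Next I would take the limit superior of both sides. Since $a_{n} \geq b_{n}$ for all $n$ implies $\limsup_{n} a_{n} \geq \limsup_{n} b_{n}$, the termwise inequality above is preserved in the limit, yielding
\begin{align*}
R_{\alpha} = \limsup_{n \to \infty} \frac{R(n)}{n^{\alpha}} \geq \limsup_{n \to \infty} \frac{Q(n)}{n^{\alpha - 1}} = Q_{\alpha},
\end{align*}
which is the main assertion.

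For the two consequences I would simply observe that $Q(n) \geq 1$ for every $n$, because $W^{1} = W \in \mathcal{L}(Y)$ always holds, and hence $Q_{\alpha} \geq 0$. Consequently, if $R_{\alpha} = 0$, then the chain $0 = R_{\alpha} \geq Q_{\alpha} \geq 0$ forces $Q_{\alpha} = 0$; and if $Q_{\alpha} = \infty$, then $R_{\alpha} \geq Q_{\alpha} = \infty$ forces $R_{\alpha} = \infty$.

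I do not anticipate any genuine obstacle here, as the entire statement is an immediate consequence of the pointwise estimate in \Cref{Prop:lowerbound} together with the monotonicity of the limit superior. The only subtlety worth flagging is that a strict pointwise inequality collapses to a weak inequality after taking limits superior, but this is harmless since the corollary is stated with $\geq$.
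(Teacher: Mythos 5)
Your proposal is correct and is exactly the argument the paper intends: \Cref{cor:R<Q} is stated without proof as an immediate consequence of \Cref{Prop:lowerbound}, obtained precisely by dividing $R(n) > n\,Q(n)$ by $n^{\alpha}$ and using monotonicity of the limit superior. Your additional observations that $Q(n) \geq 1$ and that the strict pointwise inequality weakens to $\geq$ in the limit are both accurate and handle the edge cases cleanly.
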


\begin{remark}\label{rmk:wquivalence}
In general it is not true that if $Q_{\alpha} = 0$, then $R_{\alpha} = 0$ and if $R_{\alpha} = \infty$, then $Q_{\alpha} = \infty$.  An infinite word $x$ in which one of the letters only occurs exactly once gives rise to a subshift $\Omega(x)$ where this occurs.  However, this subshift is not minimal.   The $l$-Grigorchuk subshifts (which we will shortly introduce in the next section) provide examples of uniquely ergodic and minimal subshifts which are \mbox{$\alpha$-finite} (or equivalently \mbox{$\alpha$-repulsive}), but not \mbox{$\alpha$-repetitive}, see \Cref{ex:example}.
\end{remark}

\section{$l$-Grigorchuk subshifts}\label{sec:4}

\subsection{$\boldsymbol{l}$-Grigorchuk subshifts}\label{sec:sec4}
The Grigorchuk subshift is a subshift associated to Grigorchuk's infinite $2$-group $G$.  The group $G$ was originally introduced in \cite{GRIGO:1984,G:84b} and is an infinite finitely generated torsion group and so belongs to the class of Burnside groups, see also \cite{G:84a}.  It has growth between polynomial and exponential, hence is amenable but not elementary amenable, see \cite{G:84a}.  This group therefore provided simultaneous answers to the question of Milnor \cite{M:1968} on existence of groups of intermediate growth, and to the question of Day \cite{Day:1957} on existence of amenable but not elementary amenable groups.  Lysenok \cite{L:85}, gave a recursive presentation of $G$ by generators and relations using a homomorphism $\kappa$, which we will shortly define, see \eqref{eq:kappa} and \eqref{eq:G_Presentation}.  It is remarkable that the homomorphism $\kappa$ serves not only to define $G$ algebraically, but also, as is shown in \cite{GLN:16}, to describe spectral properties of $G$ and to determine $G$ in terms of topological dynamics as a subgroup of the topological full group of a minimal Cantor system.
 
Following convention we consider the alphabet $\{ a, x, y, z \}$.  We define the semi-group homomorphism $\kappa \colon \{ a, x, y, z \}^{*} \to \{ a, x, y, z \}^{*}$ by
\begin{align}\label{eq:kappa}
\kappa(a) \coloneqq (a,x, a), \quad \kappa(x) \coloneqq y, \quad \kappa(y) \coloneqq z, \quad \kappa(z) \coloneqq x,
\end{align}
and for a finite word $w = (w_{1}, \dots, w_{n})$ we set $\kappa(w) \coloneqq \kappa(w_{1}) \dots, \kappa(w_{n})$.  The homomorphism $\kappa$ is defined to act on infinite words analogously.  It is known that there exists a unique infinite word $\eta \in \{ a, x, y, z \}^{\mathbb{N}}$ such that $\kappa(\eta) = \eta$, see for instance \cite{GLN:16}.  We call the subshift $\Omega(\eta)$ the \textit{Grigorchuk subshift}.  Alternatively, this subshift can be generated by the three semi-group homomorphisms $\tau_{x}$, $\tau_{y}$ and $\tau_{z}$ defined by
\begin{align*}
\tau_{\beta}(a) \coloneqq (a, \beta, a), \quad \tau_{\beta}(x) \coloneqq x, \quad \tau_{\beta}(y) \coloneqq y, \quad \tau_{\beta}z \coloneqq z,
\end{align*}
where $\beta \in \{ x, y, z \}$, and for $w = (w_{1}, \dots, w_{n})$ we set $\tau_{\beta}(w) \coloneqq {\tau}_{\beta}(w_{1}), \dots, {\tau}_{\beta}(w_{n})$.  Indeed, the word $\eta$ is the unique word with the prefix 
\begin{align*}
(\tau_{x} \circ \tau_{y} \circ \tau_{z})^{n}(a) = \underbrace{\underbrace{\tau_{x} \circ \tau_{y} \circ \tau_{z}} \circ \underbrace{\tau_{x} \circ \tau_{y} \circ \tau_{z}} \circ \dots \circ\underbrace{\tau_{x} \circ \tau_{y} \circ \tau_{z}}}_{n - \text{times}} (a),
\end{align*}
for all $n \in \mathbb{N}$.  We now introduce a more general class of subshifts based on this latter construction, which we call $l$-Grigorchuk subshifts, where each $l = (l_{k})_{k \in \mathbb{N}}$ is a sequence of natural numbers.

Let $l = (l_{k})_{k \in \mathbb{N}}$ denote a fixed sequence of natural numbers.  For $j \in \mathbb{N}$, we denote by $N(j)$ and $q(j)$ the unique integers such that
\begin{align*}
j=q(j)+ \sum_{i=1}^{N(j)-1} l_i \quad \text{with } 0 \leq q(j) < l_{N(j)}.
\end{align*} 
We define $\tau^{(j)}$ by
\begin{align}\label{eq:tauj}
\tau^{(j)}\coloneqq
\begin{cases}
\tau^{l_{1}}_{x} \circ \tau^{l_{2}}_{y} \circ \tau^{l_{3}}_{z} \circ \dots \circ  \tau^{l_{N(j)}}_{z} \circ \tau_{x}^{q(j)} & \mbox{if} \; N(j) \equiv 0 \pmod{3},\\
\tau^{l_{1}}_{x} \circ \tau^{l_{2}}_{y} \circ \tau^{l_{3}}_{z} \circ \dots \circ  \tau^{l_{N(j)}}_{x} \circ \tau_{y}^{q(j)} & \mbox{if} \; N(j) \equiv 1 \pmod{3},\\
\tau^{l_{1}}_{x} \circ \tau^{l_{2}}_{y} \circ \tau^{l_{3}}_{z} \circ \dots \circ  \tau^{l_{N(j)}}_{y} \circ \tau_{z}^{q(j)} & \mbox{if} \; N(j) \equiv 2 \pmod{3},
\end{cases}  
\end{align}
and let $\tau^{(0)}$ be the identity.  Additionally, we set
\begin{align}\label{eq:betaj}
\beta^{(j)} \coloneqq 
\begin{cases}
x & \mbox{if} \; N(j) \equiv 0 \pmod{3},\\
y & \mbox{if} \; N(j) \equiv 1 \pmod{3},\\
z & \mbox{if} \; N(j) \equiv 2 \pmod{3}.
\end{cases}
\end{align}

\begin{proposition}\label{Prop:Prop1}
For $l=(l_k)_{k \in \mathbb{N}}$, there exists a unique infinite word $\eta_{l}$ with prefix $\tau^{(j)}(a)$, for all $j \in \mathbb{N}_{0}$.
\end{proposition}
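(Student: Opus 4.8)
The plan is to realise $\eta_l$ as the limit of the words $\tau^{(j)}(a)$, $j \in \mathbb{N}_0$, and to see that this limit is forced. It suffices to establish two facts: (i) $\tau^{(j)}(a)$ is a prefix of $\tau^{(j+1)}(a)$ for every $j \in \mathbb{N}_0$, and (ii) $\lvert \tau^{(j)}(a) \rvert \to \infty$. Indeed, (i) makes $\{ \tau^{(j)}(a) \}_{j \in \mathbb{N}_0}$ a chain for the prefix order, and together with (ii) this pins down a unique infinite word whose initial segment of length $\lvert \tau^{(j)}(a) \rvert$ is exactly $\tau^{(j)}(a)$ for all $j$.

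First I would record the elementary behaviour of the generators. From $\tau_\beta(a) = (a,\beta,a)$ and $\tau_\beta(\beta') = \beta'$ for $\beta' \in \{x,y,z\}$, an easy induction gives $\tau_\beta^{m+1}(a) = \tau_\beta^{m}(a)\,\beta\,\tau_\beta^{m}(a)$; thus every $\tau_\beta^{m}(a)$ begins with the letter $a$, and $\tau_\beta^{m}(a)$ is a prefix of $\tau_\beta^{m'}(a)$ whenever $m \le m'$. Moreover each $\tau^{(j)}$, being a composition of the maps $\tau_\beta$, is a non-erasing homomorphism of $\{a,x,y,z\}^{*}$; such a homomorphism preserves the prefix order, i.e.\ if $u$ is a prefix of $v$ then $\tau^{(j)}(u)$ is a prefix of $\tau^{(j)}(v)$, and it fixes each of $x,y,z$.

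The crux of the proof is the recursion
\begin{align*}
\tau^{(j+1)} = \tau^{(j)} \circ \tau_{\beta^{(j)}}, \qquad j \in \mathbb{N}_0,
\end{align*}
with $\beta^{(j)}$ as in \eqref{eq:betaj}. I would verify it by reading off \eqref{eq:tauj} in the two possible cases. If $N(j+1) = N(j)$, then $q(j+1) = q(j)+1$ while all completed factors are unchanged, so the claim reduces to $\tau_{\beta^{(j)}}^{\,q(j)+1} = \tau_{\beta^{(j)}}^{\,q(j)} \circ \tau_{\beta^{(j)}}$. If instead $N(j+1) = N(j)+1$, then $q(j) = l_{N(j)}-1$ and $q(j+1) = 0$, so passing from $j$ to $j+1$ turns the innermost partial factor into a completed factor of the same letter and opens a new factor with exponent $0$; one checks that this again amounts exactly to post-composition with a single $\tau_{\beta^{(j)}}$. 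Granting the recursion, evaluating both sides at $a$ and using $\tau_{\beta^{(j)}}(a) = (a,\beta^{(j)},a)$ together with the fact that $\tau^{(j)}$ fixes $\beta^{(j)}$ gives
\begin{align*}
\tau^{(j+1)}(a) = \tau^{(j)}(a)\,\beta^{(j)}\,\tau^{(j)}(a).
\end{align*}
This yields (i), since $\tau^{(j)}(a)$ is a proper prefix of the right-hand side, and also $\lvert \tau^{(j+1)}(a) \rvert = 2\lvert \tau^{(j)}(a) \rvert + 1$; as $\lvert \tau^{(0)}(a) \rvert = 1$, we obtain $\lvert \tau^{(j)}(a) \rvert = 2^{j+1}-1$, which is (ii).

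Finally I would assemble the pieces. By (i) the sequence $\tau^{(j)}(a)$ is increasing for the prefix order and by (ii) its lengths are unbounded, so defining $\eta_l$ coordinatewise by letting its $n$-th letter be the $n$-th letter of any $\tau^{(j)}(a)$ with $\lvert \tau^{(j)}(a) \rvert \ge n$ produces a well-defined infinite word over $\{a,x,y,z\}$ having every $\tau^{(j)}(a)$ as a prefix; any infinite word with this property coincides with $\eta_l$ in each coordinate, which gives uniqueness. I expect the main obstacle to be the block-boundary case $N(j+1) = N(j)+1$ of the recursion: this is the single place where the shape of \eqref{eq:tauj} changes, and it requires carefully tracking the bookkeeping of $N(j)$ and $q(j)$ to confirm that completing one elementary factor and initialising the next (with exponent $0$) amounts to appending precisely one generator $\tau_{\beta^{(j)}}$, so that no already-constructed prefix is disturbed. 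Once this is pinned down, the prefix and length claims go through mechanically.
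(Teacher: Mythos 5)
Your proof is correct and takes essentially the same route as the paper: the paper's own one-line proof derives existence and uniqueness of $\eta_{l}$ from exactly your two facts, namely that $\tau^{(j)}(a)$ is a prefix of $\tau^{(j+1)}(a)$ and that $\lvert \tau^{(j)}(a) \rvert = 2^{j+1}-1 \to \infty$ (the latter deferred to \Cref{prop:length_of_tau}). Your explicit verification of the recursion $\tau^{(j+1)} = \tau^{(j)} \circ \tau_{\beta^{(j)}}$, giving $\tau^{(j+1)}(a) = \tau^{(j)}(a)\,\beta^{(j)}\,\tau^{(j)}(a)$, merely spells out the bookkeeping that the paper leaves implicit in the proof of \Cref{prop:length_of_tau}.
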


\begin{proof}
This is a consequence of the fact that, $\tau^{(j)}(a)$ is a prefix of $\tau^{(j+1)}(a)$, for all $j \in \mathbb{N}_{0}$, and, as we will see in \Cref{prop:length_of_tau}, $\lim_{j \to \infty} \lvert \tau^{(j)}(a) \rvert = \infty$.
\end{proof}

For a given sequence of natural numbers $l = (l_{k})_{k \in \mathbb{N}}$, we refer to the subshift $\Omega(\eta_{l})$ as the \textit{$l$-Grigorchuk subshift}, where $\eta_{l}$ is the unique word given in \Cref{Prop:Prop1}.  When it is clear from the context, we will write $\eta$ instead of $\eta_{l}$.  Note that the Grigorchuk subshift is an $l$-Grigorchuk subshift with $l$ equal to the constant one sequence, namely $l = (1, 1, 1, \dots )$.  By construction, for all $j \in \mathbb{N}$, we observe that $\eta$ has the form
\begin{align}\label{eq:tau_structure_eta}
\raisebox{-1em}{
\begin{tikzpicture}
\draw(0,0)--(6.75,0);
\draw [dotted] (6.75,0)--(8,0);
\foreach \x in {0,1.75,2.25,4,4.5,6.25,6.75}
\draw(\x,0.1)--(\x,-0.1);
\draw[decorate, decoration={brace}, yshift=1.5ex]  (0,0) -- node[above=0.4ex] {$\tau^{(j)}(a)$}  (1.75,0);
\draw[decorate, decoration={brace}, yshift=1.5ex]  (2.25,0) -- node[above=0.4ex] {$\tau^{(j)}(a)$}  (4,0);
\draw[decorate, decoration={brace}, yshift=1.5ex]  (4.5,0) -- node[above=0.4ex] {$\tau^{(j)}(a)$}  (6.25,0);
\draw[decorate, decoration={brace}, yshift=1.5ex]  (1.75,0) -- node[above=0.4ex] {$?$}  (2.25,0);
\draw[decorate, decoration={brace}, yshift=1.5ex]  (4,0) -- node[above=0.4ex] {$?$}  (4.5,0);
\draw[decorate, decoration={brace}, yshift=1.5ex]  (6.25,0) -- node[above=0.4ex] {$?$}  (6.75,0);
\node at (-0.4,-0.05) {$\eta = $};
\node at (8.05,-0.015) {,};
\end{tikzpicture}}
\end{align}
where the letters $x$, $y$ and $z$ occur infinitely often, in a prescribed order determined by the sequence $l$, in place of the question marks.  One can also define an $l$-Grigorchuk subshift where elements of $l$ are allowed to take the value zero, see \Cref{rmk:last_remark}.

\begin{proposition}\label{prop:length_of_tau}
For $j \in \mathbb{N}_{0}$ we have that $\displaystyle \lvert \tau^{j}(a) \vert = 2^{j + 1} - 1$.
\end{proposition}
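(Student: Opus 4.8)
The plan is to prove the statement by induction on $j$, exploiting the self-similar way in which the maps $\tau^{(j)}$ are built up. The base case is immediate: $\tau^{(0)}$ is the identity, so $\lvert \tau^{(0)}(a) \rvert = \lvert a \rvert = 1 = 2^{1}-1$.

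For the inductive step, the key observation I would establish first is that $\tau^{(j+1)}$ is obtained from $\tau^{(j)}$ by appending a single generator on the innermost (right-hand) side, that is $\tau^{(j+1)} = \tau^{(j)} \circ \tau_{\beta}$ for some $\beta \in \{x,y,z\}$. This is where a short case analysis on the piecewise definition \eqref{eq:tauj} is needed: either $q(j)+1 < l_{N(j)}$, in which case $N(j+1) = N(j)$ and only the rightmost exponent $q(j)$ is incremented by one; or $q(j)+1 = l_{N(j)}$, in which case $N(j+1) = N(j)+1$, the current block is completed and a new (empty) block opens. In both cases the composition defining $\tau^{(j+1)}$ is literally that defining $\tau^{(j)}$ with one additional factor $\tau_\beta$ appended on the right, the letter $\beta$ being the one dictated by the cyclic pattern $x,y,z,x,\dots$. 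Note that this is also consistent with $\tau^{(j)}(a)$ being a prefix of $\tau^{(j+1)}(a)$, as used in the proof of \Cref{Prop:Prop1}.

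Granting this, I would use $\tau_\beta(a) = (a,\beta,a)$ together with the fact that every $\tau_\gamma$ fixes the letters $x,y,z$ — hence so does the composition $\tau^{(j)}$, giving $\tau^{(j)}(\beta) = \beta$ — to compute
\begin{align*}
\tau^{(j+1)}(a) = \tau^{(j)}\bigl(\tau_\beta(a)\bigr) = \tau^{(j)}(a)\,\tau^{(j)}(\beta)\,\tau^{(j)}(a) = \tau^{(j)}(a)\,\beta\,\tau^{(j)}(a).
\end{align*}
Reading off lengths yields the recursion $\lvert \tau^{(j+1)}(a) \rvert = 2\lvert \tau^{(j)}(a) \rvert + 1$, and substituting the inductive hypothesis $\lvert \tau^{(j)}(a) \rvert = 2^{j+1}-1$ gives $2(2^{j+1}-1)+1 = 2^{j+2}-1$, completing the induction. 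As a by-product this re-derives the self-similar picture \eqref{eq:tau_structure_eta}, with the single letter $\beta$ occupying the ``$?$'' gap.

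The only genuinely delicate point is the first step, namely verifying $\tau^{(j+1)} = \tau^{(j)} \circ \tau_\beta$ directly from \eqref{eq:tauj}; everything afterwards is a one-line length count. An alternative that avoids the explicit self-similar identity is to track the pair $(\lvert w \rvert, \#_a(w))$, where $\#_a$ counts the occurrences of the letter $a$: since each generator sends $a \mapsto (a,\beta,a)$ and fixes the remaining letters, one application doubles the number of $a$'s and increases the total length by twice the current number of $a$'s. Starting from the word $a$ this gives $\#_a(\tau^{(j)}(a)) = 2^{j}$ and $\lvert \tau^{(j)}(a) \rvert = 1 + 2\sum_{k=0}^{j-1} 2^{k} = 2^{j+1}-1$, recovering the same conclusion.
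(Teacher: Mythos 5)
Your proof is correct and follows essentially the same route as the paper's: induction on $j$ via the identity $\tau^{(j+1)}(a) = \tau^{(j)}\circ\tau_{\beta^{(j)}}(a) = \tau^{(j)}(a)\,\beta^{(j)}\,\tau^{(j)}(a)$, giving the recursion $\lvert \tau^{(j+1)}(a)\rvert = 2\lvert\tau^{(j)}(a)\rvert + 1$. You merely spell out two points the paper leaves implicit — the case analysis behind $\tau^{(j+1)} = \tau^{(j)}\circ\tau_{\beta^{(j)}}$ and the fact that $\tau^{(j)}$ fixes $\beta^{(j)}$ — and your alternative letter-counting argument, while valid, is an optional extra rather than a different approach.
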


\begin{proof}
We have that $\lvert \tau^{(0)}(a) \rvert = \lvert a \rvert = 1$.  Suppose the result holds true for some $j \in \mathbb{N}_{0}$, then
\begin{align*}
\lvert \tau^{(j+1)}(a) \rvert = \lvert \tau^{(j)}(\tau_{\beta^{(j)}}(a)) \rvert = \lvert \tau^{(j)}(a) \beta^{(j)} \tau^{(j)}(a) \rvert = 2 \lvert \tau^{(j)}(a) \rvert + 1 = 2^{(j + 1) + 1} - 1.
\end{align*}
This completes the proof.
\end{proof}

\begin{corollary}\label{cor:rpulsive_iff}
An $l$-Grigorchuk subshift is repulsive if and only if it is $1$-repulsive.
\end{corollary}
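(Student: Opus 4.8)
The plan is to exploit the fact that the repulsive constant $\ell$ and the $1$-repulsive constant $\ell_1$ are assembled from the \emph{same} quantities $A_{1,n}$, the only difference being that $\ell$ is a global infimum whereas $\ell_1$ is a limit inferior. Grouping the admissible pairs $(w,W)$ in the definition of $\ell$ according to the length $n=\lvert W\rvert$ gives
\begin{align*}
\ell=\inf_{n\geq 2}A_{1,n}\qquad\text{and}\qquad \ell_1=\liminf_{n\to\infty}A_{1,n},
\end{align*}
so that $\ell\leq\ell_1$ automatically. Moreover, for each fixed $n$ the set of length-$n$ words in $\mathcal{L}(Y)$ is finite, so $A_{1,n}$ is the minimum of finitely many values of the form $(\lvert W\rvert-\lvert w\rvert)/\lvert w\rvert\geq 1/n$ (or $+\infty$ when no admissible pair exists); in particular $A_{1,n}>0$ for every $n$.

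For the forward implication I would argue as follows. If $Y$ is repulsive then $\ell>0$, and since $\ell\leq\ell_1$ we immediately obtain $\ell_1>0$. It remains to check $\ell_1<\infty$, and here the self-similar structure enters. From the proof of \Cref{prop:length_of_tau} we have the relation $\tau^{(j+1)}(a)=\tau^{(j)}(a)\,\beta^{(j)}\,\tau^{(j)}(a)$, so setting $w=\tau^{(j)}(a)$ and $W=\tau^{(j+1)}(a)$ produces admissible pairs: both are prefixes of $\eta$, hence lie in $\mathcal{L}(Y)$, and $w$ is simultaneously a prefix and a suffix of $W$. Using $\lvert\tau^{(j)}(a)\rvert=2^{j+1}-1$ from \Cref{prop:length_of_tau}, these pairs give
\begin{align*}
A_{1,\,2^{j+2}-1}\leq \frac{\lvert W\rvert-\lvert w\rvert}{\lvert w\rvert}=\frac{2^{j+1}}{2^{j+1}-1},
\end{align*}
which tends to $1$ as $j\to\infty$; hence $\ell_1\leq 1<\infty$. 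Thus $0<\ell_1<\infty$ and $Y$ is $1$-repulsive. Note that this simultaneously shows $\ell_1\leq 1$ for \emph{every} $l$-Grigorchuk subshift, independently of any hypothesis.

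For the reverse implication, suppose $Y$ is $1$-repulsive, so $0<\ell_1<\infty$. Choose $N$ with $A_{1,n}\geq\ell_1/2>0$ for all $n\geq N$. For the finitely many remaining indices $2\leq n<N$ we have $A_{1,n}>0$ by the finiteness observation above, so $c\coloneqq\min_{2\leq n<N}A_{1,n}>0$ (if every such entry equals $+\infty$, then the infimum is governed entirely by the indices $n\geq N$). Therefore $\ell=\inf_{n\geq 2}A_{1,n}\geq\min\{c,\ell_1/2\}>0$, which is precisely the statement that $Y$ is repulsive.

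The only genuinely substantive step is the finiteness $\ell_1<\infty$ in the forward direction; everything else reduces to the elementary inequality $\ell\leq\ell_1$ together with the positivity of each $A_{1,n}$ coming from the finiteness of the language at each length. I expect the point to watch is precisely that $\ell$ and $\ell_1$ differ only through infimum-versus-liminf, and that the nested words $\tau^{(j)}(a)$ inside $\tau^{(j+1)}(a)$ automatically pin $\ell_1$ into the interval $(0,1]$, so that only the positivity (lower bound) needs to be transferred between the two notions.
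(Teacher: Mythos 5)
Your proposal is correct and takes essentially the same approach as the paper: its one-line proof likewise uses that $\tau^{(j)}(a)$ is a prefix and suffix of $\tau^{(j+1)}(a)$ together with \Cref{prop:length_of_tau} to obtain the unconditional finiteness bound $\ell_{1}\leq 1$ (the paper writes ``$Q_{1}\leq 1$'', evidently a typo for $\ell_{1}\leq 1$, since $Q_{1}\geq 3$ for every $l$-Grigorchuk subshift), from which the equivalence follows. The infimum-versus-liminf bookkeeping you spell out --- namely $\ell=\inf_{n\geq 2}A_{1,n}\leq\ell_{1}$ together with positivity of each $A_{1,n}$, which comes from the finiteness of the language at each length --- is exactly the step the paper leaves implicit, and you verify it correctly.
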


\begin{proof}
For an $l$-Grigorchuk subshift, we observe that since $\tau^{(j)}(a)$ is a prefix and suffix of $\tau^{(j+1)}(a)$ and since $\tau^{(j)}(a) \in \mathcal{L}(\eta)$, for $j \in \mathbb{N}$, by \Cref{prop:length_of_tau} we have $Q_{1} \leq 1$.  Therefore, an $l$-Grigorchuk subshift is repulsive if and only if it is $1$-repulsive.
\end{proof}

\begin{proposition}\label{prop:minimality}
An $l$-Grigorchuk subshift is minimal.
\end{proposition}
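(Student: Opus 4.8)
The goal is to show that an $l$-Grigorchuk subshift $\Omega(\eta)$ is minimal, which by the discussion in \Cref{sec:subshiftt_intro} is equivalent to showing that $\eta$ is uniformly recurrent: every factor $u$ of $\eta$ occurs infinitely often and with bounded gaps. My plan is to exploit the self-similar structure encoded in \eqref{eq:tau_structure_eta}, namely that $\eta$ is tiled by copies of the prefix blocks $\tau^{(j)}(a)$ separated by single letters from $\{x,y,z\}$.

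\emph{First step: every $\tau^{(j)}(a)$ is uniformly recurrent in $\eta$ with controlled gaps.}
From \eqref{eq:tau_structure_eta} I would record that, for each fixed $j\in\mathbb{N}_0$, the word $\eta$ decomposes as a concatenation of blocks $\tau^{(j)}(a)$ each followed by one of the separator letters encoded by the $\beta^{(i)}$. Consequently $\tau^{(j)}(a)$ occurs at regularly spaced positions and the gap between two successive occurrences is at most $\lvert\tau^{(j)}(a)\rvert+1=2^{j+1}$, using \Cref{prop:length_of_tau}. Thus each block-prefix $\tau^{(j)}(a)$ recurs with uniformly bounded gaps. The key structural input here is the identity $\tau^{(j+1)}(a)=\tau^{(j)}(a)\,\beta^{(j)}\,\tau^{(j)}(a)$ already used in the proof of \Cref{prop:length_of_tau}, which shows that the block at level $j+1$ is built from two level-$j$ blocks glued by a single letter.

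\emph{Second step: pass from the special blocks to arbitrary factors.}
Let $u$ be an arbitrary factor of $\eta$. Since $\lvert\tau^{(j)}(a)\rvert=2^{j+1}-1\to\infty$, I can choose $j$ large enough that $\lvert\tau^{(j)}(a)\rvert\ge\lvert u\rvert$; because $u$ is a factor of $\eta$ and every sufficiently long prefix of $\eta$ is a concatenation of $\tau^{(j)}(a)$'s and separators, the word $u$ appears inside some window of length at most $2\lvert\tau^{(j)}(a)\rvert+1$ spanned by at most two consecutive blocks together with the intervening separator. Each time such a consecutive pair $\tau^{(j)}(a)\,\beta\,\tau^{(j)}(a)$ reappears in $\eta$ — and by the first step these pairs reappear with bounded gaps — the factor $u$ reappears at the corresponding offset. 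This yields a uniform bound on the gaps of $u$ in terms of $j$ and hence establishes uniform recurrence, giving minimality.

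\emph{Main obstacle.}
The delicate point is the second step: I must ensure that a given factor $u$ really is captured entirely within a bounded window that \emph{repeats rigidly} throughout $\eta$, rather than straddling a region where the separator letters vary. The separators between consecutive level-$j$ blocks are not all the same letter — they follow the pattern dictated by the $\beta^{(i)}$ in \eqref{eq:tauj} and \eqref{eq:betaj} — so a factor straddling a block boundary could in principle see different separators at different occurrences. I would resolve this by working at a sufficiently high level $k>j$: the level-$k$ block $\tau^{(k)}(a)$ contains $u$ (and its surrounding context, including the relevant separators) strictly in its interior as an exact sub-pattern, and since $\tau^{(k)}(a)$ itself recurs with bounded gaps by the first step, so does $u$ together with its fixed local context. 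This upgrading to a high enough level is the technical heart of the argument.
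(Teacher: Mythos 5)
Your proof is correct and takes essentially the same route as the paper's: both arguments place an arbitrary factor inside a single block $\tau^{(k)}(a)$ for $k$ sufficiently large (which is exactly how the paper sidesteps the varying-separator issue you identify), and then use the decomposition \eqref{eq:tau_structure_eta} of $\eta$ into $\tau^{(k)}(a)$-blocks separated by single letters to bound the gaps between occurrences, yielding uniform recurrence and hence minimality. The paper's proof is just a terser version of your two-step argument.
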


\begin{proof}
For every word $w$ in the language of $\eta$ there exists a $j \in\mathbb{N}$ such that $w$ is a factor of $\tau^{(j)}(a)$.  The structure of $\eta$, namely that given in \eqref{eq:tau_structure_eta}, yields that the gap between two successive occurrences of $w$'s is bounded, and so, $\eta$ is uniformly recurrent.  As uniformly recurrence is equivalent to minimality, see for instance \cite{Combinatorics:2010}, this completes the proof.
\end{proof}

While we do not use it in the sequel we would like to highlight the role $\kappa$ and $\tau^{(j)}$, and hence $\tau_{x}$, $\tau_{y}$ and $\tau_{z}$, play in Grigorchuk's infinite $2$-group $G$. Indeed, $\kappa$ is (a version of) the substitution used by Lysenok \cite{L:85} to obtain a presentation of $G$. More specifically, \cite{L:85} shows that
\begin{align}\label{eq:G_Presentation}
G = \langle a, x, y, z \mid 1 = a^{2} = x^{2} = y^{2} = z^{2} = \kappa^{k}((a,z)^{4}) = \kappa^{k}((a,z,a,x,a,x)^{4}), k \in \mathbb{N}_{0}  \rangle.
\end{align}
This presentation can be written using $\tau^{(j)}$, and hence $\tau_{x}$, $\tau_{y}$ and $\tau_{z}$, by using the fact that
\begin{align*}
\kappa^{j}(a,z) = \tau^{(j)}(a, \beta^{(j-1)}),
\end{align*}
and that, for all $j \in \mathbb{N}$,
\begin{align*}
\kappa^{j}(a,z,a,x,a,x) =
\tau^{(1)}(a,x) 
\tau^{(2)} (a,y) 
\tau^{(3)} (a,z) 
\tau^{(4)} (a,x) 
\dots 
\tau^{(j+1)}(a \beta^{(j)}).
\end{align*}
Here $\tau^{(j)}$ and $\beta^{(j)}$ are as defined in \eqref{eq:tauj} and \eqref{eq:betaj} with $l$ equal to the constant $1$ sequence, that is $l = (l_{i})$ with $l_{i} = 1$.

\subsection{$\boldsymbol{\alpha}$-Finite and $\boldsymbol{\alpha}$-Repulsive}\label{sec:section4.2}

\Cref{thm:G-alpha-free} below gives a necessary and sufficient condition on a given sequence of natural numbers $l$ to guarantee that the associated $l$-Grigorchuk subshift is $\alpha$-finite, which by \Cref{thm:equivalence} is equivalent to the subshift being $\alpha$-repulsive.  In particular, we obtain that an $l$-Grigorchuk subshift is $1$-finite (and hence $1$-repulsive) if and only if $l$ is a bounded sequence. Thus, as $1$-repulsive implies repulsive, if $l$ is a bounded sequence, then the associated $l$-Grigorchuk subshift is repulsive.

\begin{theorem}\label{thm:G-alpha-free}
For $\alpha \geq 1$ the following three statements are equivalent.
\begin{enumerate}[itemsep=0.1em,topsep=-0.25em,label=(\roman*)]
\item An $l$-Grigorchuk subshift is $\alpha$-repulsive.
\item An $l$-Grigorchuk subshift is $\alpha$-finite.
\item $\limsup_{n \to \infty} \lvert l_{n+1} + (1 - \alpha) \sum_{i = 1}^{n} l_{i} \rvert <\infty$.
\end{enumerate}
\end{theorem}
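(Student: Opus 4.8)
The plan is to obtain the equivalence of (i) and (ii) for free from \Cref{thm:equivalence}, and then to prove (ii)$\Leftrightarrow$(iii) by computing the order of growth of the power-counting function $Q$ for an $l$-Grigorchuk subshift. Thus the whole problem reduces to deciding, for each $\alpha\ge 1$, whether $Q_\alpha=\limsup_{n}Q(n)/n^{\alpha-1}$ is finite and non-zero, and to recognising that answer as the arithmetic condition (iii).

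The engine of the computation is the self-similar decomposition in \eqref{eq:tau_structure_eta} together with the identity $\tau^{(j+1)}(a)=\tau^{(j)}(a)\,\beta^{(j)}\,\tau^{(j)}(a)$ used in \Cref{prop:length_of_tau}: for each level $j$ the word $\eta$ is a concatenation of copies of $\tau^{(j)}(a)$ interspersed with single letters, and $\tau^{(j)}(a)$ is simultaneously a prefix and a suffix of $\tau^{(j+1)}(a)$. First I would show that, up to cyclic conjugacy, every power appearing in $\mathcal{L}(\eta)$ is of the form $\bigl(\tau^{(j)}(a)\,c\bigr)^{p}$: because each $\tau_\beta$ is injective and recognisable, any repetition in $\eta$ can be desubstituted to a repetition at a lower level, so that the maximal powers are in bijection with the maximal runs of a single letter in the sequence of separators occupying the question marks of \eqref{eq:tau_structure_eta}. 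This turns the determination of $Q$ into a combinatorial problem about those separator sequences.

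The separator sequence at level $j$ is a ruler sequence: its $i$-th entry is $\beta^{(j+v_2(i))}$, where $v_2$ denotes the $2$-adic valuation. Since $\beta^{(j)}=\beta^{(j')}$ precisely when the block indices $N(j),N(j')$ are congruent modulo $3$, a run of the letter $\beta^{(j)}$ persists until the valuation first carries $j+v_2(i)$ past the current block of $l$, and is prolonged only by the reappearance of the same letter three blocks later. Carrying out this count, I expect the longest run at level $j$ to have length of order $2^{d}$, where $d=\bigl(\sum_{i=1}^{N(j)}l_i\bigr)-j$ is the distance from $j$ to the next block boundary. Consequently the dominant powers occur at the block-boundary levels $j=\sum_{i=1}^{k}l_i$, where $d=l_{k+1}$ and $\lvert\tau^{(j)}(a)\,c\rvert=2^{\,j+1}\asymp 2^{\sum_{i=1}^{k}l_i}$, while the power itself is of order $2^{\,l_{k+1}}$. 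Writing $n\asymp 2^{\sum_{i=1}^{k}l_i}$ and passing to logarithms, $Q(n)/n^{\alpha-1}$ is of order $2^{\,l_{k+1}-(\alpha-1)\sum_{i=1}^{k}l_i}$, so $Q_\alpha$ is finite and non-zero if and only if $l_{k+1}+(1-\alpha)\sum_{i=1}^{k}l_i$ remains bounded, which is condition (iii).

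I expect the main obstacle to be the upper bound on $Q(n)$, valid for every $n$ rather than only along the block-boundary subsequence. Exhibiting the explicit powers built from the separator runs gives the lower bound readily; the reverse estimate requires the recognisability/desubstitution step to be made precise, so as to rule out any word whose length is not aligned with the hierarchy carrying an unexpectedly large power, and it requires controlling the ruler-sequence combinatorics finely enough—in particular the interaction between the current block length and the lengths of the later blocks bearing the same letter—so that both the finiteness and the non-vanishing of $Q_\alpha$ are captured by the single two-sided estimate in (iii). As a sanity check, the constant sequence reproduces the linearly repetitive Grigorchuk subshift and is consistent with $\alpha=1$ and with \Cref{cor:rpulsive_iff}.
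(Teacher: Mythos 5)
Your plan is correct and coincides in substance with the paper's own proof: the paper likewise quotes \Cref{thm:equivalence} for (i)$\Leftrightarrow$(ii) and reduces (ii)$\Leftrightarrow$(iii) to an exact computation of the power-counting function (\Cref{thm:Q-Bounds}), whose proof carries out precisely your recognisability/desubstitution step (odd lengths give $Q=1$, lengths $\equiv 2 \pmod 4$ give powers of $(a,x)$, lengths $\equiv 0 \pmod 4$ desubstitute via $\eta\lvert_{k} = \tau^{(j)}(\eta^{(j)}\lvert_{k/2^{j}})$, and any word $v$ with $Q(v)\geq 3$ is rotated onto a prefix with $Q(v)-1 \leq Q(\eta\lvert_{\lvert v \rvert}) \leq Q(v)$), then maximises to find the dominant powers $2^{l_{m+1}+1}-1$ at the block-boundary lengths $2^{1+\sum_{i=1}^{m} l_{i}}$, exactly as you predict. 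Your ruler-sequence bookkeeping $s_{i}=\beta^{(j+v_{2}(i))}$, with a run of $\beta^{(j)}$ of length $2^{d+1}-1$ created around a same-letter separator reappearing three blocks later, is an explicit repackaging of the paper's desubstitution statements rather than a different route, and it reproduces the paper's formula $Q(\eta\lvert_{2^{j+1}}) = 2^{l_{N(j)}-q(j)+1}-1$ up to the harmless constant you allow.
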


\begin{proof}
The result follows from \Cref{thm:equivalence} and \Cref{thm:Q-Bounds} given below.
\end{proof}

\begin{theorem}\label{thm:Q-Bounds}
For $\alpha > 1$, an $l$-Grigorchuk subshift fulfils the following equality.
\begin{align*}
Q_\alpha = \limsup_{m\to\infty} \frac{2^{l_{m+1} + 1}}{ 2^{(\alpha-1)\sum_{i=1}^{m} l_{i}}  }
\end{align*}
Moreover, we have that
\begin{align*}
\limsup_{m\to\infty} 2^{l_{m+1}+1} -1 \leq Q_{1} \leq \limsup_{m\to\infty} 2^{l_{m+1}+1}.
\end{align*}
\end{theorem}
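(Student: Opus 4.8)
The plan is to read off all high powers directly from the self-similar block structure of $\eta$. By \Cref{prop:length_of_tau} and the recursion $\tau^{(j+1)}(a)=\tau^{(j)}(a)\,\beta^{(j)}\,\tau^{(j)}(a)$, for each scale $j$ the word $\eta$ is a concatenation of the blocks $\tau^{(j)}(a)$ (each of length $2^{j+1}-1$) separated by single \emph{connecting letters}; writing these connectors as $s^{(j)}_{1},s^{(j)}_{2},\dots$, an induction on the recursion gives $s^{(j)}_{k}=\beta^{(j+\nu(k))}$, where $\nu(k)$ denotes the $2$-adic valuation of $k$. Thus a factor of the form $W^{p}$ with $p\ge 2$ can occur only inside a maximal run of equal connectors at some scale, and conversely every such run produces a power of (a rotation of) $\tau^{(j)}(a)\beta^{(j)}$. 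The whole computation therefore reduces to finding, for each $j$, the longest run of the letter $\beta^{(j)}$ among the $s^{(j)}_{k}$ together with the primitive period of the word it repeats.

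The key lemma is the run-length count. Since $\beta^{(j)}$ is constant exactly while $N(j)$ is constant, the smallest $t\ge 1$ with $\beta^{(j+t)}\neq\beta^{(j)}$ equals $l_{N(j)}-q(j)$, the distance from $j$ to the end of its $l$-block. Feeding this into the ruler formula $s^{(j)}_{k}=\beta^{(j+\nu(k))}$ and counting consecutive $k$ whose valuation avoids the forbidden range, one finds that the maximal run of $\beta^{(j)}$ has length $2^{\,l_{N(j)}-q(j)+1}-1$. Because the connector does not change across an entire $l$-block, consecutive scales share the same letter and their periods nest; tracking this nesting identifies the genuinely primitive repeated word, whose length is $2^{\sum_{i=1}^{m}l_i}$ for the block indexed by $m=N(j)-1$, and upgrades the run into the maximal power of that primitive word, namely $2^{\,l_{m+1}+1}$ up to a bounded boundary correction. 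This is the step that fixes both the exact exponent and the exact period.

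It then remains to pass from these periods to $Q_\alpha$. For $n$ equal to one of the primitive periods $2^{\sum_{i=1}^{m}l_i}$ the above gives $Q(n)$; for any other $n$ a length-$n$ word can be a proper power only if $n$ is a multiple of a primitive period, in which case the attainable exponent is correspondingly smaller, so such $n$ never improve the ratio. Optimising the position $q(j)$ within a block shows that the block-initial scale dominates, whence
\[
\limsup_{n\to\infty}\frac{Q(n)}{n^{\alpha-1}}=\limsup_{m\to\infty}\frac{2^{\,l_{m+1}+1}}{2^{(\alpha-1)\sum_{i=1}^{m}l_i}}.
\]
For $\alpha>1$ the normalisation $n^{\alpha-1}\to\infty$ forces $l_{m+1}\to\infty$ along any subsequence realising a positive value, so the bounded boundary correction is negligible and the identity for $Q_\alpha$ is exact. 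For $\alpha=1$ there is no normalisation, the correction survives, and one can only sandwich $Q_1$ between $\limsup_{m}\bigl(2^{\,l_{m+1}+1}-1\bigr)$ and $\limsup_{m}2^{\,l_{m+1}+1}$; these agree precisely when $(l_m)$ is unbounded, which is why only a two-sided bound is claimed.

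The main obstacle is the exact bookkeeping in the run-length lemma. Converting a run of single connectors into a power of a genuinely \emph{primitive} word --- correctly accounting for the nesting of equal connectors over a full $l$-block, which is what produces both the exponent $2^{\,l_{m+1}+1}$ and the period $2^{\sum_{i=1}^{m}l_i}$ --- is where an off-by-one in the scale is easiest to commit, and it is exactly what separates the sharp formula ($\alpha>1$) from the mere bounds ($\alpha=1$). Equally delicate is the matching upper bound: one must rule out any accidental repetition, that is, show that every sufficiently long occurrence of $W^{p}$ is synchronised with the block decomposition at the appropriate scale and hence is already accounted for above. I expect this completeness direction to be the harder half, since it requires a rigidity/synchronisation argument for occurrences of squares rather than a direct construction.
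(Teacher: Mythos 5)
Your construction half is sound and is, modulo packaging, the paper's own computation: your run-length lemma --- the maximal run of the connector $\beta^{(j)}$ has length $2^{l_{N(j)}-q(j)+1}-1$ --- is exactly the paper's value $Q(\eta\lvert_{2^{j+1}}) = 2^{l_{N(j)}-q(j)+1}-1$, which the paper obtains not by $2$-adic bookkeeping but by renormalising prefixes, $\eta\lvert_{k} = \tau^{(j)}(\eta^{(j)}\lvert_{k/2^{j}})$ for $2^{j}\mid k$, and invoking the conjugacy of \Cref{rmk:remark_to_be_added} to reduce to the base case $\eta\lvert_{k}=(a,x)^{k/2}$ with $Q(2)=2^{l_{1}+1}-1$; likewise your optimisation over the position $q(j)$ (block-initial scales dominate) and your handling of the $\pm1$ correction for $\alpha>1$ versus $\alpha=1$ mirror the end of the paper's proof. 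The genuine gap is the step you yourself flag and then defer: the claim that every factor $W^{p}$ is synchronised with the block decomposition, so that all powers arise from runs of equal connectors. Since $Q(n)$ is a supremum over \emph{all} words of length $n$, not just prefixes or rotations of $\eta\lvert_{n}$, everything beyond the lower bound for $Q_{\alpha}$ hinges on this, and your proposal contains no argument for it; your blanket assertion that other lengths $n$ contribute only via multiples of primitive periods presupposes the very same unproven synchronisation. The paper closes this with a short density argument, its Statement (v): if $\lvert v\rvert = 2^{n}+r$ and $Q(v)\geq 3$, then, because every second connector at scale $j=n-1$ in \eqref{eq:tau_structure_eta} equals $\beta^{(j)}$, the word $\tau^{(j)}((a,\beta^{(j)}))$ must occur inside $v^{Q(v)}$; this anchors $v^{Q(v)}$ to the scale-$j$ grid, forces $v$ to be a cyclic rotation of $\eta\lvert_{2^{n}+r}$, and gives $Q(v)-1\leq Q(\eta\lvert_{2^{n}+r})\leq Q(v)$. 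So the ``completeness direction'' you anticipate as the harder half is indeed indispensable, but it is a few lines of density reasoning rather than a heavy rigidity theorem; without some version of it your argument establishes only the lower bound for $Q_{\alpha}$. (The paper also disposes of the remaining lengths directly rather than via divisibility: odd $n$ give $Q(n)=1$ because every second letter of $\eta$ is $a$, and $n\equiv 2\pmod 4$ forces $\eta\lvert_{n}=(a,x)^{n/2}$.)

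One concrete slip of exactly the kind you warned against: the primitive period at the start of the $(m+1)$-st block is $2^{1+\sum_{i=1}^{m}l_{i}}$ (the block $\tau^{(j)}(a)$ with $j=\sum_{i=1}^{m}l_{i}$, of length $2^{j+1}-1$, plus one connector), not $2^{\sum_{i=1}^{m}l_{i}}$. This only rescales the limit by the constant factor $2^{\alpha-1}$, so it is immaterial for the equivalences in \Cref{thm:G-alpha-free}; curiously, the paper's own final sandwich, computed with the correct period $2^{1+\sum_{i=1}^{m}l_{i}}$, differs from the displayed formula of \Cref{thm:Q-Bounds} by this same constant, so your stated formula agrees with the theorem as printed even though your period does not. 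In short: correct structural picture, correct run-length count, correct optimisation and limit analysis, but the synchronisation step (the paper's Statement (v)) is named rather than proved, and that omission leaves the upper bound --- and hence the claimed equality --- unestablished.
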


For the proof of this result we will require the following definition and remark.

\begin{definition}
Fix a sequence $l = (l_{i})_{i \in \mathbb{N}}$ and let $\eta$ denote the unique infinite word given by \Cref{Prop:Prop1}.  For $j \in \mathbb{N}$, define $\eta^{(j)}$ to be the infinite word associated to the sequence 
\begin{align*}
( \hspace{-0.5em}\underbrace{0, 0, \dots, 0}_{(N(j)-1) - \text{times}}\hspace{-0.5em}, l_{N(j)} - q(j), l_{N(j) +1}, l_{N(j) + 2}, \dots ),
\end{align*}
given by \Cref{Prop:Prop1}.
\end{definition}

\begin{remark}\label{rmk:remark_to_be_added}
Let $(l_{i})_{i \in \mathbb{N}}$ be a sequence of natural numbers.  The (generalised) Grigorchuk subshifts associated to the sequences $(0, 0, \dots, 0, l_{1}, l_{2}, l_{3}, \dots)$ and $(l_{1}, l_{2}, l_{3}, \dots)$ are topologically conjugate through the semi-group homomorphism which maps $a$ to $a$ and applies a cyclic permutation to $\{ x, y, z \}$.
\end{remark}

\begin{proof}[{Proof of \Cref{thm:Q-Bounds}}]
We structure the proof as follows.  We prove the following five statements from which we will deduce the required result.
\begin{enumerate}[itemsep=0.1em,topsep=-0.25em,label=(\roman*)]
\item\label{item1} $Q(2) = 2^{l_{1} +1} -1$
\item\label{item3a} If $k \in \mathbb{N}$ is such that $k \equiv 1 \pmod{4}$ or $k \equiv 3 \pmod{4}$, then $Q(k) = 1$.
\item\label{item3b} If $k \in \mathbb{N}$ is such that $k \equiv 2 \pmod{4}$ and $\eta\lvert_{k} \eta\lvert_{k} \in \mathcal{L}(\eta)$, then $\displaystyle Q(\eta\lvert_{k}) = \lfloor (2^{l_{1} + 2} - 2)/ k  \rfloor$.
\item\label{item3c}
If $k \in \mathbb{N}$ is such that $k \equiv 0 \pmod{4}$ and $\eta\lvert_{k} \eta\lvert_{k} \in \mathcal{L}(\eta)$, then
\begin{align}\label{eq:k=0mod4}
Q(\eta\lvert_{k}) = \left\lfloor \frac{2^{l_{N(j)}-q(j) + 1}-1}{k/2^{j+1}} \right\rfloor,
\end{align}
where $j$ is the smallest integer such that $k/2^{j} \equiv 2 \pmod{4}$.
\item\label{item2} Let $n \in \mathbb{N}$ and let $0 \leq r < 2^{n}$.  For each $v = (v_{1}, v_{2}, \dots, v_{2^{n} + r} )\in \mathcal{L}(\eta)$ with $Q(v) \geq 3$, there exists $1 \leq k \leq 2^{n} + r$ such that $\eta\lvert_{2^n+r} = (v_{k}, \ldots, v_{2^n+r}, v_{1}, \ldots ,v_{k-1})$ and, moreover, $Q(v) - 1 \leq Q(\eta\lvert_{2^n+r}) \leq Q(v)$.
\end{enumerate}
To prove Statement \ref{item1}, notice that $(y,a,y)$ and $(z,a,z)$ are not factors of $\eta$.  This follows, since each $(4k + 2)$-th letter of $\eta$ is equal to $x$, for all $k \in \mathbb{N}_{0}$.  By definition, we have that $\eta = \tau_{x}^{l_{1}} ( \eta^{(l_{1})})$.  Since the $(4k + 2)$-th letter of $\eta^{(l_{1})}$ is equal to $y$, for all $k \in \mathbb{N}_{0}$, it follows that $(x, a, x)$ is not a factor of $\eta^{(l_{1})}$, and hence, by \Cref{prop:length_of_tau},
\begin{align*}
Q((a, x))
= \frac{\lvert \tau^{l_{1}}_{x}(a) x \tau^{l_{1}}_{x}(a) \rvert - 1}{2}
= 2^{l_{1}+1} - 1.
\end{align*}
Since every second letter of $\eta$ is equal to $a$, it follows that if $n \equiv 1 \pmod{2}$, then $Q(n) = 1$.

Assume that the conditions of Statement \ref{item3b} hold, that is $k = 2^{n} + r \equiv 2 \pmod{4}$, where $n \in \mathbb{N}$ and $0 \leq r < 2^{n}$.  By construction we have that $\eta_{i} = x$ for all $i \equiv 2 \pmod{4}$.  Thus,
\begin{align*}
\raisebox{-1em}{
\begin{tikzpicture}
\draw(0,0)--(6.75,0);
\draw [dotted] (6.75,0)--(8,0);
\draw(8,0)--(11.4166,0);
\foreach \x in {0,1.75,2.25,4,4.5,6.25,6.75,8,9.75,10.25,11.4166}
\draw(\x,0.1)--(\x,-0.1);
\draw[decorate, decoration={brace}, yshift=1.5ex]  (0,0) -- node[above=0.4ex] {$(a, x, a)$}  (1.75,0);
\draw[decorate, decoration={brace}, yshift=1.5ex]  (2.25,0) -- node[above=0.4ex] {$(a, x, a)$}  (4,0);
\draw[decorate, decoration={brace}, yshift=1.5ex]  (4.5,0) -- node[above=0.4ex] {$(a, x, a)$}  (6.25,0);
\draw[decorate, decoration={brace}, yshift=1.5ex]  (8,0) -- node[above=0.4ex] {$(a, x, a)$}  (9.75,0);
\draw[decorate, decoration={brace}, yshift=1.5ex]  (10.25,0) -- node[above=0.4ex] {$(a, x)$}  (11.4166,0);
\draw[decorate, decoration={brace}, yshift=1.5ex]  (1.75,0) -- node[above=0.4ex] {$?$}  (2.25,0);
\draw[decorate, decoration={brace}, yshift=1.5ex]  (4,0) -- node[above=0.4ex] {$?$}  (4.5,0);
\draw[decorate, decoration={brace}, yshift=1.5ex]  (6.25,0) -- node[above=0.4ex] {$?$}  (6.75,0);
\draw[decorate, decoration={brace}, yshift=1.5ex]  (9.75,0) -- node[above=0.4ex] {$?$}  (10.25,0);
\node at (-0.4,-0.05) {$\eta\lvert_{k} = $};
\end{tikzpicture}}
\end{align*}
Since $\eta\lvert_{k} \eta\lvert_{k} \in \mathcal{L}(\eta)$, we have that $\eta\lvert_{k} = (a, x)^{k/2}$.  This in tandem with Statement \ref{item1} yields that
\begin{align*}
Q(\eta\lvert_{k}) = \left\lfloor \frac{2^{l_1+1}-1}{k/2} \right\rfloor.
\end{align*}

For Statement \ref{item3c}, notice that for all $j \in \mathbb{N}$ with $k\equiv 0 \pmod{2^{j}}$, we have
\begin{align*}
\eta\lvert_{k} = \tau^{(j)}(\eta^{(j)}\lvert_{k/2^{j}}).
\end{align*}
Since $\tau^{(j)}$ is a semi-group homomorphism on $\{a,x,y,z\}^{*}$, it follows that $Q(\eta\lvert_{k}) = Q(\eta^{(j)}\lvert_{k\vert2^{j}})$.  (Note here that $Q(\eta\lvert_{k})$ is taken with respect to the language $\mathcal{L}(\eta)$ and $Q(\eta^{(j)}\lvert_{k/2^{j}})$ is taken with respect to the language $\mathcal{L}(\eta^{(j)})$).  This in tandem with \Cref{rmk:remark_to_be_added} and Statement \ref{item3b} yields that 
\begin{align*}
Q(\eta\lvert_{k}) = Q(\eta^{(j)}\lvert_{k/2^{j}}) = \left\lfloor \frac{2^{l_{N(j)}-q(j) + 1}-1}{k/2^{j+1}} \right\rfloor,
\end{align*}
where $j$ is the smallest integer such that $k/2^{j} \equiv 2 \pmod{4}$.

We now turn to the proof of Statement \ref{item2}. By Statement \ref{item3a} it is sufficient to consider words of even length.  To this end, let $v \in \mathcal{L}(\eta)$ with $Q(v) \geq 3$ and with $\lvert v \rvert = 2^{n}+r$, for some $n\in\mathbb{N}$, and $0 \leq r < 2^{n}$.  Due to the structure of $\eta$ given in \eqref{eq:tau_structure_eta}, where we set $j = n -1$, and since every $(2m + 1)$-th question mark in \eqref{eq:tau_structure_eta} is equal to $\beta^{(j)}$, for all $m \in \mathbb{N}_{0}$, we have that $\tau^{(j)}((a, \beta^{(j)}))$ is a factor of $v^{Q(v)}$.  Thus there exists a natural number $k \leq 2^{n} + r$ such that
\begin{align*}
\eta\lvert_{2^{n}+r} = (v_{k}, \dots, v_{2^{n}+r}, v_{1}, \dots, v_{k-1}),
\end{align*}
see \eqref{eq:fig.1}, which yields that $Q(v)-1 \leq Q(\eta\lvert_{2^{n}+r}) \leq Q(v)$.
\begin{align}\label{eq:fig.1}
\raisebox{-2.45em}{
\begin{tikzpicture}
\draw(0.75,0)--(10.25,0);
\draw [dotted] (0.75,0)--(0,0);
\draw [dotted] (10.25,0)--(11,0);
\foreach \x in {1,4,7,10}
\draw(\x,0.2)--(\x,0);
\draw[decorate, decoration={brace}, yshift=2ex]  (1,0) -- node[above=0.4ex] {$v$}  (4,0);
\draw[decorate, decoration={brace}, yshift=2ex]  (4,0) -- node[above=0.4ex] {$v$}  (7,0);
\draw[decorate, decoration={brace}, yshift=2ex]  (7,0) -- node[above=0.4ex] {$v$}  (10,0);
\draw(2,0)--(2,-0.2);
\draw(5,0)--(5,-0.2);
\draw[decorate, decoration={brace, mirror}, yshift=-2ex]  (2,0) -- node[below=0.4ex] {$\eta\lvert_{2^{n}+r}$}  (5,0);
\draw[decorate, decoration={brace, mirror}, yshift=-2ex]  (5,0) -- node[below=0.4ex] {$\eta\lvert_{2^{n}+r}$}  (8,0);
\end{tikzpicture}}
\end{align}
With Statements \ref{item1}, \ref{item3a}, \ref{item3b},\ref{item3c} and \ref{item2} at hand we can now prove the required result.  If $k \equiv 0 \pmod{4}$, then the left hand side of \eqref{eq:k=0mod4} is maximised on the set $[2^{n}, 2^{n+1}) \cap \mathbb{N}$, at $j = n -1$, namely when $k = 2^{n}$.  Further, \eqref{eq:k=0mod4} in tandem with \eqref{eq:tau_structure_eta} and \Cref{prop:length_of_tau}, yields
\begin{align*}
Q(\eta\lvert_{2^{n}}) = 2^{l_{N(n-1)}-q(n-1) + 1} - 1.
\end{align*}
The function $n \mapsto Q(\eta\lvert_{2^{n}})$ is maximised on the set $[\sum_{i = 1}^{m} l_{i}, \sum_{i=1}^{m+1} l_{i}) \cap \mathbb{N}$ when $n - 1 = \sum_{i = 1}^{m} l_{i}$.  Indeed, we have that
\begin{align}\label{eq:explaination_lower}
Q(\eta\lvert_{k}) = 2^{l_{m+1}+1} - 1,
\end{align}
where $k = 2^{1 + \sum_{i = 1}^{m} l_{i}}$.  
Hence,
\begin{align*}
\limsup_{m\to\infty} \frac{2^{l_{m+1}+1} - 1}{{\left(2^{1 + \sum_{i=1}^{m} l_{i}}\right)}^{\alpha-1}}
\leq \limsup_{n\to\infty} \frac{Q(n)}{n^{\alpha-1}}
\leq \limsup_{n\to\infty} \frac{Q(\eta\lvert_{n}) + 1}{n^{\alpha-1}}
\leq \limsup_{m\to\infty} \frac{2^{l_{m+1}+1}}{{\left(2^{1 + \sum_{i=1}^{m} l_{i}}\right)}^{\alpha-1}}.
\end{align*}
Here 
the first inequality follows from \eqref{eq:explaination_lower};
the second inequality follows from the latter results of Statement \ref{item2}; 
the last inequality follows from Statements \ref{item1}, \ref{item3a}, \ref{item3b}, \ref{item3c} together with \eqref{eq:explaination_lower}.
\end{proof}

\begin{corollary}
An $l$-Grigorchuk subshift satisfies $Q(2^{j + 1}) = 2^{l_{N(j)} - q(j) + 1} - 1$, for all $j \in \mathbb{N}$.  
\end{corollary}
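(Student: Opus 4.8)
The plan is to reduce the corollary to the computation already carried out inside the proof of \Cref{thm:Q-Bounds}. First I would specialise Statement (iv), that is equation \eqref{eq:k=0mod4}, to the length $k = 2^{j+1}$. Since $0 \leq q(j) < l_{N(j)}$ gives $l_{N(j)} - q(j) \geq 1$, the value $P \coloneqq 2^{l_{N(j)} - q(j) + 1} - 1$ is at least $3$; in particular $\eta\lvert_{2^{j+1}} \eta\lvert_{2^{j+1}} \in \mathcal{L}(\eta)$, so \eqref{eq:k=0mod4} is applicable. For $k = 2^{j+1}$ the smallest integer $j'$ with $k/2^{j'} \equiv 2 \pmod 4$ is $j' = j$ (smaller exponents make $k/2^{j'}$ divisible by $4$), whence the denominator $k/2^{j'+1} = 2^{j+1}/2^{j+1} = 1$ and the floor in \eqref{eq:k=0mod4} is trivial. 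This yields $Q(\eta\lvert_{2^{j+1}}) = P$, and since $\eta\lvert_{2^{j+1}}$ is itself a word of length $2^{j+1}$, it already gives the lower bound $Q(2^{j+1}) \geq P$.

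For the reverse inequality I would take a word $v \in \mathcal{L}(\eta)$ with $\lvert v \rvert = 2^{j+1}$ realising $Q(2^{j+1})$. Because $Q(2^{j+1}) \geq P \geq 3$, Statement (v) applies and shows that $\eta\lvert_{2^{j+1}}$ is a cyclic rotation of $v$ satisfying $Q(v) - 1 \leq Q(\eta\lvert_{2^{j+1}}) = P \leq Q(v)$. Hence it remains only to exclude the possibility $Q(v) = P + 1$, which is the single off-by-one left open by the bounds in Statement (v).

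To rule this out I would use the block decomposition \eqref{eq:tau_structure_eta} together with \Cref{prop:length_of_tau}. Writing $B \coloneqq \eta\lvert_{2^{j+1}} = \tau^{(j)}(a)\,\beta^{(j)}$, every nontrivial cyclic rotation of $B$ has the form $VU$ with $B = UV$ and $V$ a nonempty suffix of $B$; since the final letter of $B$ is $\beta^{(j)}$, such a $V$ always ends in $\beta^{(j)}$. An occurrence of $(VU)^{P+1} = V B^{P} U$ would contain the maximal run $B^{P}$ of $P$ consecutive $\tau^{(j)}(a)\beta^{(j)}$-blocks, that is exactly the run whose maximality forces $Q(B) = P$ via \eqref{eq:explaination_lower}. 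The letter of $\eta$ immediately preceding this run is the question-mark slot separating it from the previous $\tau^{(j)}(a)$, and maximality of the run requires that slot to carry a higher-level marker different from $\beta^{(j)}$. But the factor $V$ preceding $B^{P}$ ends in $\beta^{(j)}$ and would occupy precisely that slot, forcing it to equal $\beta^{(j)}$ — a contradiction. Thus no rotation attains $P+1$, and $Q(2^{j+1}) = P$, as claimed.

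The routine part is the arithmetic of the first paragraph. The main obstacle is this final step: to make the boundary argument rigorous one must know that the block $B^{P}$ occurs in $\eta$ only in alignment with the canonical $\tau^{(j)}$-decomposition, i.e.\ a rigidity (recognisability) property of the substitution $\tau^{(j)}$. A convenient concrete handle is that every second letter of $\eta$ is $a$, while $\tau^{(j)}(a)$ begins and ends with $a$; this constrains the positions at which $B$ can occur to a fixed sublattice and thereby forces the block alignment used above. Once this alignment is granted, the boundary contradiction closes the off-by-one and completes the proof.
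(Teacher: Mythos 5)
Your first two paragraphs are sound and agree in outline with the paper's own (one-line) proof, which likewise obtains the value by specialising Statement (iv), i.e.\ \eqref{eq:k=0mod4}, to $k=2^{j+1}$ (where indeed the minimal $j'$ with $2^{j+1}/2^{j'}\equiv 2 \pmod 4$ is $j'=j$ and the denominator is $1$) and disposes of the remaining words of length $2^{j+1}$ by an unspecified ``argument by contradiction''; your use of Statement (v) to reduce such a word $v$ to a rotation of $\eta\lvert_{2^{j+1}}$, leaving only the off-by-one case $Q(v)=P+1$, is a reasonable instantiation of that skeleton. One small repair: your justification that $\eta\lvert_{2^{j+1}}\eta\lvert_{2^{j+1}}\in\mathcal{L}(\eta)$ from ``$P\geq 3$'' is circular as written; it should instead come from Statement (i) applied to $\eta^{(j)}$ via \Cref{rmk:remark_to_be_added}, which gives $(a\beta^{(j)})^2\in\mathcal{L}(\eta^{(j)})$ and hence, applying $\tau^{(j)}$, the required square.

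The genuine gap is the rigidity claim on which your final step rests. The handle you offer --- every second letter of $\eta$ is $a$ --- pins occurrences of $B=\eta\lvert_{2^{j+1}}$ only modulo $2$, not modulo $2^{j+1}$; and the alignment statement itself is \emph{false} in the degenerate regime $j<l_1$: there $B=(a,x)^{2^j}$, and $B^{P}$ occurs at every odd offset inside the maximal $(a,x)$-run coming from $\tau_x^{l_1}(a)$ (for instance starting at position $3$), so there is no canonical marker slot preceding the run and your boundary contradiction cannot be run. (The corollary still holds there, since the run computation behind Statement (iii) gives $Q(v)=\lfloor (2^{l_1+1}-1)/2^{j}\rfloor = P$ for both rotations directly.) A uniform repair that avoids recognisability altogether uses the Toeplitz structure of $\eta$, which follows inductively from \eqref{eq:tau_structure_eta} and \Cref{prop:length_of_tau}: odd positions carry $a$, and an even position $p$ carries $\beta^{(\nu-1)}$, where $\nu$ is the $2$-adic valuation of $p$. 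If $v^{P+1}\in\mathcal{L}(\eta)$ with $\lvert v\rvert = 2^{j+1}$, then $\eta$ is $2^{j+1}$-periodic on a window of length $(P+1)2^{j+1}=2^{\,j+l'+2}$, where $l'=l_{N(j)}-q(j)$. Any such window contains a position $p^{*}$ of valuation exactly $j+l'+1$ (these form a progression of common difference $2^{\,j+l'+2}$), carrying the letter $\beta^{(j+l')}\neq\beta^{(j)}$ since $j+l'=\sum_{i=1}^{N(j)}l_i$ starts a new block; but $p^{*}\pm 2^{j+1}$ lies in the window, has valuation exactly $j+1$, and carries $\beta^{(j)}$, so periodicity forces $\beta^{(j+l')}=\beta^{(j)}$ --- a contradiction. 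This excludes $Q(2^{j+1})\geq P+1$ for all words at once, making Statement (v) and the alignment discussion unnecessary.
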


\begin{proof}
The result follows from \eqref{eq:tau_structure_eta}, \Cref{prop:length_of_tau} and Statement \ref{item3c} in the proof of \cref{thm:Q-Bounds} together with an argument by contradiction.
\end{proof}

\subsection{$\boldsymbol{\alpha}$-Repetitive}\label{sec:section4.3}

Our next result gives a necessary and sufficient condition on a given sequence of natural numbers $l = (l_{i})_{i = 1}^{\infty}$ to guarantee that the associated $l$-Grigorchuk subshift is $\alpha$-repetitive.  In particular, we obtain that an $l$-Grigorchuk subshift is $1$-repetitive if and only if $l$ is a bounded sequence. Thus, as $1$-repetitive implies linearly repetitive, if $l$ is a bounded sequence, then the associated $l$-Grigorchuk subshift is linearly repetitive.  We would also like to mention here that an exact formula for the repetitive function of an $l$-Grigorchuk subshift has been obtained, independently, in \cite{LenzSell:2016}, and hence they have also obtained a criterion similar to ours for an $l$-Grigorchuk subshift to be $\alpha$-repetitive.

\begin{theorem}\label{thm:G-alpha-repetative}
For $\alpha \geq 1$ an $l$-Grigorchuk subshift is $\alpha$-repetitive if and only if
\begin{align*}
\limsup_{n \to \infty} \left\lvert l_{n+2} + l_{n+1} + (1 - \alpha) \sum_{i = 1}^{n} l_{i} \right\rvert <\infty.
\end{align*}
\end{theorem}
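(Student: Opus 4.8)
The plan is to mirror the proof of \Cref{thm:Q-Bounds}: since the subshift $Y=\Omega(\eta)$ is $\alpha$-repetitive precisely when $R_\alpha=\limsup_{n\to\infty}R(n)/n^\alpha$ is finite and non-zero, it suffices to obtain sharp two-sided bounds on the repetitive function $R$ on the hierarchical scales $n=2^{1+\sum_{i=1}^m l_i}$ and to show that these scales dominate the $\limsup$. Writing $L_m:=\sum_{i=1}^m l_i$, the goal is to prove
\begin{align*}
R\bigl(2^{1+L_m}\bigr)\asymp 2^{1+L_{m+2}},
\end{align*}
with multiplicative constants independent of $m$. The criterion then follows by the same limit computation that produces \Cref{thm:G-alpha-free} from \Cref{thm:Q-Bounds}, and the appearance of $L_{m+2}=L_m+l_{m+1}+l_{m+2}$ in the exponent is exactly what introduces the extra summand $l_{m+2}$ separating the repetitive criterion from the $\alpha$-finite one.

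The first step is to reduce $R(n)$ to a return-time quantity. By \Cref{prop:minimality} the subshift is minimal, so $\eta$ is uniformly recurrent and every factor $w\in\mathcal L(\eta)$ has a finite maximal gap $g(w)$ between consecutive occurrences; a short argument then shows that $R(n)$ agrees, up to an additive term of order $n$, with $\max\{g(w)\colon \lvert w\rvert=n\}$, since a window shorter than $g(w)+\lvert w\rvert$ can avoid the worst such $w$ while a longer one must meet every factor. The core combinatorial task is to locate that worst factor, which I would do via the self-similar presentation $\eta=\tau^{(j)}(\eta^{(j)})$ and the block picture \eqref{eq:tau_structure_eta}. By \Cref{prop:length_of_tau} a factor of length $n\approx 2^{1+L_m}$ spans at most two consecutive copies of $\tau^{(L_m)}(a)$ and hence straddles a single separator letter; because $\tau^{(L_m)}$ fixes $x,y,z$, these separators are the single letters of $\eta^{(L_m)}$, taking by \eqref{eq:betaj} the value $\beta^{(L_m)}$ at the level-$m$ boundaries, $\beta^{(L_{m+1})}$ at the level-$(m+1)$ boundaries, and $\beta^{(L_{m+2})}$ at the level-$(m+2)$ boundaries. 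Since $N(L_k)\equiv k+1\pmod 3$, the letters $\beta^{(L_m)},\beta^{(L_{m+1})},\beta^{(L_{m+2})}$ are pairwise distinct and exhaust $\{x,y,z\}$, while $\beta^{(L_{m+3})}=\beta^{(L_m)}$. Hence the rarest length-$n$ factors are those carrying the separator $\beta^{(L_{m+2})}$: this separator value occurs only at boundaries of level $\equiv m+2\pmod 3$, the most frequent being the level-$(m+2)$ boundaries, and the flanking blocks are forced to be the common suffix and prefix $\tau^{(L_m)}(a)$ because $\tau^{(L_m)}(a)$ is simultaneously a prefix and a suffix of every higher block. Such configurations recur exactly between consecutive copies of $\tau^{(L_{m+2})}(a)$, at gap $2^{1+L_{m+2}}$ by \Cref{prop:length_of_tau}, giving $g(w)\asymp 2^{1+L_{m+2}}$ and the claimed estimate; the bound $R(n)>nQ(n)$ of \Cref{Prop:lowerbound} combined with \Cref{thm:Q-Bounds} provides a convenient consistency check.

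It then remains to pass from the special scales to all $n$ and to perform the limit. Paralleling the reduction statement (the analogue of the last item in the proof of \Cref{thm:Q-Bounds}), I would show that for general $n$ the largest gap is governed by the block level $j$ with $2^{j+1}\le n$, that $g$ is, up to constants, constant in $n$ across each such range and jumps only at the scales $2^{1+L_m}$, and therefore that $\limsup_n R(n)/n^\alpha$ is attained along the subsequence $n=2^{1+L_m}$. Substituting the estimate for $R(2^{1+L_m})$ then yields
\begin{align*}
R_\alpha\asymp\limsup_{m\to\infty}\frac{2^{1+L_{m+2}}}{2^{\alpha(1+L_m)}}=\limsup_{m\to\infty}2^{\,l_{m+1}+l_{m+2}+(1-\alpha)L_m+c}
\end{align*}
for a bounded correction $c$, and, exactly as in the passage from \Cref{thm:Q-Bounds} to \Cref{thm:G-alpha-free}, this is finite and non-zero precisely when $\limsup_{n\to\infty}\bigl\lvert l_{n+2}+l_{n+1}+(1-\alpha)\sum_{i=1}^n l_i\bigr\rvert<\infty$.

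The step I expect to be the main obstacle is the bookkeeping in these two auxiliary reductions: first, pinning down the exact additive and multiplicative constants in the passage from $R(n)$ to the maximal gap $g(w)$ (the counterpart of the careful floor-function estimates in \Cref{thm:Q-Bounds}); and second, the uniform control of $g(w)$ over an entire dyadic range that is needed to justify that the extremal ratio really occurs at $n=2^{1+L_m}$ and not at intermediate lengths. Both are routine in spirit but demand the same level of care as the enumerated statements in the proof of \Cref{thm:Q-Bounds}.
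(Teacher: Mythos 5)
Your global architecture is the same as the paper's: the paper proves two-sided bounds at every dyadic scale, $2^{\,l_{N(j)+1}+l_{N(j)}-q(j)+j+1} \leq R(2^{j+1}-1) \leq 2^{\,l_{N(j)+1}+l_{N(j)}-q(j)+j+2}$ (\Cref{lem:repetative_function_bounds}), then uses monotonicity of $R$ and the fact that the exponent is maximised at $q(j)=0$ --- your special scales $n=2^{1+L_m}$, writing $L_m=\sum_{i=1}^m l_i$ as you do --- to run exactly the limit computation you sketch. The genuine gap is in the lower bound, at the word ``exactly'' in your claim that the rare configurations ``recur exactly between consecutive copies of $\tau^{(L_{m+2})}(a)$''. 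What has to be ruled out is a \emph{misaligned} occurrence of the rare word $w=(\text{suffix of }B)\,\beta^{(L_{m+2})}\,(\text{prefix of }B)$ with $B=\tau^{(L_m)}(a)$: any occurrence of a length-$2^{1+L_m}$ factor straddles exactly one separator of the level-$L_m$ decomposition, but a priori $w$ could also occur split at a \emph{different} point, reading $(\text{suffix of }B)\,\gamma\,(\text{prefix of }B)$ with $\gamma\in\{\beta^{(L_m)},\beta^{(L_{m+1})}\}$ on the boundary and with your designated letter $\beta^{(L_{m+2})}$ matching an \emph{interior} letter of $B$. Such interior matches abound: by the mod-$3$ cycling you yourself invoke, $\beta^{(L_{m+2})}=\beta^{(L_{m-1})}$, and this letter occurs inside $B$ at gaps of order $2^{1+L_{m-1}}$. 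If such a coincidence existed, $w$ would recur at gaps of order $2^{1+L_m}$ rather than $2^{1+L_{m+2}}$, and the claimed estimate $g(w)\asymp 2^{1+L_{m+2}}$ --- hence the entire lower bound for $R$, hence the ``only if'' direction of the theorem --- collapses.

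This exclusion is precisely where the paper spends its effort: in the proof of \Cref{lem:repetative_function_bounds}, a hypothetical occurrence of $\tau^{(j-1)}\circ\tau_{\beta'}(a)$ inside $\tau^{(j+k)}(a)$ is shown, via \eqref{eq:repetative_x_z1} and \eqref{eq:repetative_x_z2}, to force two letter identities that contradict the facts that $\tau^{(j-1)}(a)$ and $\tau^{(j)}(a)$ are \emph{palindromes} and that the relevant boundary letters are pairwise distinct. Palindromicity of the blocks is never mentioned in your proposal, yet it (or some substitute alignment-rigidity argument, say via the Toeplitz position classes $2^{j+1}(2\mathbb{N}_0+1)$ on which $\eta$ is constant) is indispensable; tellingly, your list of anticipated obstacles omits it entirely, naming instead the constants in the $R$-versus-gap reduction and the interpolation across dyadic ranges --- both of which are comparatively routine and are dispatched in the paper by monotonicity of $R$ together with the inequality $l_{N(j)}-q(j)+l_{N(j)+1}-j(\alpha-1)\leq l_{N(j)}+l_{N(j)+1}-(\alpha-1)\sum_{k=1}^{N(j)-1}l_k$. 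Your upper-bound direction (every length-$2^{1+L_m}$ factor recurs once each of the three separator letters has recurred, giving $g(w)\lesssim 2^{1+L_{m+2}}$) and your observation that the extremal ratio occurs at $q(j)=0$ are correct and consistent with the paper; it is only the non-occurrence half of the key lemma that is missing, and it is the heart of the proof.
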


We prove \Cref{thm:G-alpha-repetative} by using the following bounds on the repetitive function.

\begin{lemma}\label{lem:repetative_function_bounds}
Let $l = (l_{i})_{i \in \mathbb{N}}$ denote a sequence of natural numbers. The repetitive function for an $l$-Grigorchuk subshift satisfies the following inequalities, for $j \in \mathbb{N}$,
\begin{align*}
2^{l_{N(j) + 1} + l_{N(j)} - q(j) + j + 1} \leq R\left( 2^{j + 1} - 1 \right)
\leq 2^{l_{N(j) + 1} + l_{N(j)} - q(j) + j + 2}.
\end{align*}
\end{lemma}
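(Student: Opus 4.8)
The plan is to bound the repetitive function $R(2^{j+1}-1)$ by identifying the longest factor of $\eta$ that fails to contain some word of length $2^{j+1}-1$, together with the word of that length witnessing the worst case. The key structural fact I would exploit is that, by \Cref{prop:length_of_tau}, the word $\tau^{(j)}(a)$ has length exactly $2^{j+1}-1$, and that $\eta$ is built by repeating blocks $\tau^{(j)}(a)$ separated by single ``separator'' letters $\beta^{(j)}$ (with $x$, $y$, $z$ occurring in a prescribed order determined by $l$), as depicted in \eqref{eq:tau_structure_eta}. So factors of length $r = 2^{j+1}-1$ are essentially determined by how they straddle these separators, and the repetitive function at this length is governed by how far apart the ``rarest'' such factor can be.

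First I would make precise which factor of length $2^{j+1}-1$ is hardest to find. Because the separators between consecutive $\tau^{(j)}(a)$-blocks follow the same pattern as the separators one level up, the block $\tau^{(j)}(a)$ is itself flanked in $\eta$ by letters whose spacing is controlled by the next exponents $l_{N(j)}$ and $l_{N(j)+1}$. The rarest word of length $2^{j+1}-1$ should be one that sits at a specific phase relative to the longest run of identical separators; the maximal gap between its occurrences is then the length of the longest factor of $\eta$ avoiding it. I would compute this gap by passing to the image under $\tau^{(j)}$ (which is injective on the relevant finite words and multiplies lengths in a controlled, almost-doubling way), reducing the problem to estimating the maximal gap between occurrences of a short word — essentially the $Q$-type quantities already computed in the proof of \Cref{thm:Q-Bounds}, Statements~\ref{item3b} and \ref{item3c}. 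The exponent $l_{N(j)+1}+l_{N(j)}-q(j)+j+1$ in the lower bound is exactly what one expects: the $j+1$ accounts for the block length via $\tau^{(j)}$, the $l_{N(j)}-q(j)$ records the remaining run length of the current separator letter, and the $l_{N(j)+1}$ records the length of the next run of separators that must be traversed before the rare factor reappears.

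Concretely, for the lower bound I would exhibit an explicit word $w$ of length $2^{j+1}-1$ and two occurrences of $w$ in $\eta$ whose gap is at least $2^{l_{N(j)+1}+l_{N(j)}-q(j)+j+1}$, forcing some factor of length $R(2^{j+1}-1)-1$ to miss a word of length $2^{j+1}-1$; this shows $R(2^{j+1}-1) \geq 2^{l_{N(j)+1}+l_{N(j)}-q(j)+j+1}$. For the upper bound I would argue that \emph{every} factor of length $2^{j+1}-1$ reappears within a window of length $2^{l_{N(j)+1}+l_{N(j)}-q(j)+j+2}$, by showing that any window of $\eta$ of that length contains all the relevant separator patterns and hence, after factoring through $\tau^{(j)}$, all words of length $2^{j+1}-1$. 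The factor of $2$ (i.e. the $+2$ versus $+1$ in the exponent) is the cheap slack that absorbs boundary effects from the straddling of block boundaries and from the floor functions inherited from \Cref{thm:Q-Bounds}.

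\textbf{The main obstacle} will be the bookkeeping of phases at the block boundaries: because a factor of length $2^{j+1}-1$ can begin anywhere within a block and spill over a separator, one must carefully track which separator run (of length $l_{N(j)}-q(j)$ at the current level, then $l_{N(j)+1}$ at the next) a given factor is forced to cross, and verify that the worst phase is the one producing the stated exponent rather than a larger one. I expect this to require the same case analysis on $k \bmod 4$ and the passage to $\eta^{(j)}$ via \Cref{rmk:remark_to_be_added} that appears in the proof of \Cref{thm:Q-Bounds}, so I would lean on those statements directly rather than re-deriving the combinatorics of $\eta$ from scratch. The clean almost-doubling of lengths under $\tau^{(j)}$ (\Cref{prop:length_of_tau}) is what keeps the lower and upper bounds within a single factor of $2$, which is all that is needed downstream to extract the $\limsup$ condition in \Cref{thm:G-alpha-repetative}.
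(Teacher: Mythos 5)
Your overall architecture matches the paper's: for the upper bound you argue, as the paper does, that every factor of length $2^{j+1}-1$ sits inside one of $\tau^{(j)}\circ\tau_{x}(a)$, $\tau^{(j)}\circ\tau_{y}(a)$, $\tau^{(j)}\circ\tau_{z}(a)$, that all three of these occur in $\tau^{(j+l_{N(j)}-q(j)+l_{N(j)+1}+1)}(a)$ (one must exhaust the current separator run, then the next, then see one letter of the third), and that by \eqref{eq:tau_structure_eta} any window of twice that block length contains a complete block; this half of your sketch is essentially the paper's proof. The lower bound is also structured as in the paper --- identify a ``rare'' word of length $2^{j+1}-1$ whose first occurrence is delayed by $l_{N(j)}-q(j)+l_{N(j)+1}$ levels --- and your reading of the exponent is correct. (The paper phrases this not via gaps but by exhibiting $\tau^{(j+l_{N(j)}-q(j)+l_{N(j)+1})}(a)$, of length $2^{j+l_{N(j)}-q(j)+l_{N(j)+1}+1}-1$, as a word of the language missing the rare factor; your gap formulation is equivalent.)

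However, there is a genuine gap in how you propose to prove the lower bound. You plan to ``reduce the problem to estimating the maximal gap between occurrences of a short word --- essentially the $Q$-type quantities'' of Statements~(iii) and~(iv) in the proof of \Cref{thm:Q-Bounds}. Those statements compute maximal \emph{powers} $Q(\eta\lvert_{k})$, which are governed by $l_{N(j)}-q(j)$ alone; fed through \Cref{Prop:lowerbound} they yield only $R(2^{j+1}-1) > (2^{j+1}-1)Q(2^{j+1}-1) \approx 2^{j+l_{N(j)}-q(j)+2}$, and the decisive $l_{N(j)+1}$ term in the exponent is unreachable by this route. What the paper actually supplies, and what your proposal lacks, is a proof that the specific word $\tau^{(j-1)}(a)\,\beta\,\tau^{(j-1)}(a)$, with $\beta = \beta^{(j+l_{N(j)}-q(j)+l_{N(j)+1})}$ the \emph{third} separator letter, does not occur in $\tau^{(j+k)}(a)$ for all $k \leq l_{N(j)}-q(j)+l_{N(j)+1}$: any putative occurrence must sit at an even shift (every second letter of $\eta$ is $a$), and then the palindromicity of $\tau^{(j-1)}(a)$ and $\tau^{(j)}(a)$ forces a single position of the palindrome to carry both $\beta^{(j)}$ and a different separator letter --- a contradiction (see \eqref{eq:factors}, \eqref{eq:repetative_x_z1} and \eqref{eq:repetative_x_z2}). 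Your appeal to injectivity of $\tau^{(j)}$ does not substitute for this: injectivity of the homomorphism says nothing about misaligned occurrences straddling block boundaries, and ruling those out is exactly the recognizability-type statement the palindrome/parity argument provides. Since you explicitly defer this point as ``bookkeeping'' without the idea that resolves it, the lower half of \Cref{lem:repetative_function_bounds} is not established by your plan.
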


\begin{proof}
By \eqref{eq:tau_structure_eta} we have that $\tau^{(j-1)} \circ \tau_{x} (a)$, $\tau^{(j-1)} \circ \tau_{y} (a)$ and $\tau^{(j-1)} \circ \tau_{z} (a)$ all belong to $\mathcal{L}(\eta)$ and that
\begin{align*}
\left\lvert \tau^{(j-1)} \circ \tau_{x} (a) \right\rvert =
\left\lvert \tau^{(j-1)} \circ \tau_{y} (a) \right\rvert =
\left\lvert \tau^{(j-1)} \circ \tau_{z} (a) \right\rvert =
\left \lvert \tau^{(j)}(a) \right\rvert =
2^{1 + q(j) + \sum_{i = 1}^{N(j) - 1} l_{i}} - 1 = 2^{j + 1} - 1,
\end{align*}
We claim that, for all $k \in \{ 1, 2, \dots, l_{N(j)} - q(j)\}$, the word
\begin{align*}
\tau^{(j + k)}(a) = \tau^{(j)} \circ \tau_{\beta^{(j)}}^{k}(a)
\end{align*}
does not contain as factors both the words 
\begin{align}\label{eq:factors}
\tau^{(j-1)} \circ \tau_{\beta^{(j + l_{N(j)} - q(j))}}(a)
\quad \text{and} \quad
\tau^{(j-1)} \circ \tau_{\beta^{(j + l_{N(j)} - q(j) + l_{N(j) + 1})}}(a).
\end{align}
For if this were the case, then, since the first letter of the words in \eqref{eq:factors} is equal to $a$ and both $\tau^{(j-1)}(a)$ and $\tau^{(j)}(a)$ are palindromes, there exists an integer $m \in [2^{j-1}+1, 2^{j}-1]$ with
\begin{align}\label{eq:repetative_x_z1}
\sigma^{2m}(\tau^{(j)}(a) \beta^{(j)} \tau^{(j)}(a)) \lvert_{2^{j+1}-1} = \tau^{(j-1)}(a) \beta^{(j + l_{N(j)} - q(j))} \tau^{(j-1)}(a)
\end{align}
or, such that
\begin{align}\label{eq:repetative_x_z2}
\sigma^{2m}(\tau^{(j)}(a) \beta^{(j)} \tau^{(j)}(a)) \lvert_{2^{j+1}-1} = \tau^{(j-1)}(a) \beta^{(j + l_{N(j)} - q(j) + l_{N(j) + 1})} \tau^{(j-1)}(a).
\end{align}
Thus, the $(2^{j+1}-2m)$-th letter of $\tau^{(j-1)}(a)$ is equal to $\beta^{(j)}$ and the $(2m - 2^{j})$-th letter of $\tau^{(j-1)}(a)$ is equal to $\beta^{(j + l_{N(j)} - q(j))}$ in the case of \eqref{eq:repetative_x_z1} and $\beta^{(j + l_{N(j)} - q(j) + l_{N(j) + 1})}$ in the case of \eqref{eq:repetative_x_z2}.  As $\tau^{(j-1)}(a)$ is a palindrome, $\beta^{(j)} \neq \beta^{(j + l_{N(j)} - q(j))}$ and $\beta^{(j)} \neq \beta^{(j + l_{N(j)} - q(j)+ l_{N(j) + 1})}$, this yields a contradiction to the initial assumption.

Similarly, for all $k \in \{1, 2, \dots, l_{N(j)+1} \}$, the word
\begin{align*}
\tau^{(j + l_{N(j)} - q(j) + k)}(a) = \tau^{(j)} \circ \tau^{l_{N(j)} - q(j)}_{\beta^{(j)}} \circ \tau^{k}_{\beta^{(j + l_{N(j)} - q(j))}}(a)
\end{align*}
does not contain as a factor the word 
\begin{align*}
\tau^{(j-1)} \circ \tau_{\beta^{(j + l_{N(j)} - q(j)+ l_{N(j) + 1})}}(a).
\end{align*}
This yields the lower bound for the repetitive function, namely that
\begin{align*}
R(2^{j + 1} - 1) \geq \left\lvert \tau^{(j + l_{N(j)} - q(j) + l_{N(j) + 1})}(a) \right\rvert + 1 = 2^{j + l_{N(j)} - q(j) + l_{N(j) + 1}}.
\end{align*}
Due to the structure of $\eta$, given a word of length $2^{j+1}-1$ in $\mathcal{L}(\eta)$ it is necessarily a factor of $\tau^{(j)} \circ \tau_{x}(a)$, $\tau^{(j)} \circ \tau_{y}(a)$ or $\tau^{(j)} \circ \tau_{z}(a)$.  Thus, any word of length $2^{j+1}-1$ is a factor of
\begin{align*}
\tau^{(j + l_{N(j)} - q(j) + l_{N(j) + 1} + 1)}(a) = \tau^{(j)} \circ \tau^{l_{N(j)} - q(j)}_{\beta^{(j)}} \circ \tau^{l_{N(j)}}_{\beta^{(j + l_{N(j)} - q(j))}} \circ \tau_{\beta^{(j + l_{N(j)} - q(j)+ l_{N(j) + 1})}} (a).
\end{align*}
This in tandem with \eqref{eq:tau_structure_eta} and \Cref{prop:length_of_tau} yields that
\begin{align*}
R(2^{j + 1} - 1)
\leq 2 \left\lvert \tau^{(j + l_{N(j)} - q(j) + l_{N(j) + 1} + 1)}(a) \right\rvert
< 2^{j + l_{N(j)} - q(j) + l_{N(j) + 1} + 2},
\end{align*}
which completes the proof.
\end{proof}

\begin{proof} [{Proof of \Cref{thm:G-alpha-repetative}}]
For $n \in \mathbb{N}$, let $j = j(n)$ denote the unique natural number such that $2^{j - 1} \leq n < 2^{j}$.  By definition, the repetitive function is monotonically increasing, and so $R(2^{j-1} - 1) \leq R(n) \leq R(2^{j} - 1)$.  Combining this with \Cref{lem:repetative_function_bounds}, yields that
\begin{align*}
2^{1 - \alpha} 2^{l_{N(j-1)} - q(j) + l_{N(j-1)+1} - (j - 1) (\alpha - 1)}
\leq \frac{R(n)}{n^{\alpha}}
\leq 2^{2 + \alpha} 2^{l_{N(j)} - q(j) + l_{N(j) + 1} - j(\alpha - 1)}.
\end{align*}
Since
\begin{align*}
l_{N(j)} - q(j) + l_{N(j) + 1} - j(\alpha - 1) \leq l_{N(j)} + l_{N(j) + 1} - (\alpha - 1) \sum_{k = 1}^{N(j) - 1} l_{k},
\end{align*}
we have that $0 < R_{\alpha} < \infty$, if and only if,
\begin{align*}
\limsup_{j \to \infty} \left\lvert l_{N(j)} + l_{N(j) + 1} - (\alpha - 1) \sum_{k = 1}^{N(j) - 1} l_{k} \right\rvert < \infty.
\end{align*}
This completes the proof.
\end{proof}

\subsection{Examples}
\label{sec:examples}

Here we discuss several examples of sequences $l = (l_{n})_{n \in \mathbb{N}}$ for which the associated $l$-Grigorchuk subshift exhibts difference order characteristics.

\begin{example}\label{ex:example} \mbox{ }
\begin{enumerate}[itemsep=0.1em,topsep=-0.25em,label=(\roman*)]
\item\label{(1)} If $l$ is a bounded sequence, then the associated $l$-Grigorchuk subshift is $1$-repetitive and $1$-repulsive, and hence, $1$-finite.
\item\label{(3)} Let $b \geq 2$ denote a fixed integer.  If $l = ( b^{n} )_{n \in \mathbb{N}}$, then the associated $l$-Grigorchuk subshift is $b^{2}$-repetitive, and $b$-repulsive (and hence $b$-finite).  Thus, in general, $\alpha$-repetitive is not equivalent to $\alpha$-repulsive, and hence nor $\alpha$-finite.
\item\label{(4)} Let $( b_{n} )_{n \in \mathbb{N}}$ denote a bounded sequence, and set $l_{n} = 2^{n/2} - b_{n/2}$ if $n$ is even, and set $l_{n} = b_{(n+1)/2}$ otherwise.  The associated $l$-Grigorchuk subshift is $2$-repetitive, however, it is not $\alpha$-repulsive nor $\alpha$-finite, for any value of $\alpha \geq 1$.
\item Let $l_{n} = 2^{n/2}-n$ if $n$ is even and $l_{n} = n$ otherwise. The associated $l$-Grigorchuk subshift is neither $\alpha$-repetitive, $\alpha$-repulsive nor $\alpha$-finite for any value of $\alpha \geq 1$.
\item If $l = (l_{n})_{n \in \mathbb{N}}$ is a sequence of natural number such that there exists a non-constant polynomial $P$ with $ l_{n}=P(n)$, then the $l$-Grigorchuk subshift is neither \mbox{$\alpha$-repulsive}, \mbox{$\alpha$-finite} nor $\alpha$-repetitive, for any value of $\alpha \geq 1$.  This is a consequence of Faulhalber's formula \cite{CG:95}.
\end{enumerate}
\end{example}

From \Cref{ex:example}~\ref{(3)} and~\ref{(4)}, for $\alpha> 1$, we see that the $l$-Grigorchuk subshifts provide examples which demonstrate that $\alpha$-repulsive, and hence $\alpha$-finite, is not equivalent to \mbox{$\alpha$-repetitive}.  This gives rise to the question how the notions of $\alpha$-repetitive and $\beta$-repulsive, and hence $\beta$-finite, are connected in terms of $l$-Grigorchuk subshifts. This is what we address in the following proposition; indeed the connection, which we have observed in \Cref{ex:example}~\ref{(1)} and~\ref{(3)} is in fact true in general.

\begin{proposition}
Let $l$ be a sequence of natural numbers. If the $l$-Grigorchuk subshift is $\alpha$-repulsive, and hence $\alpha$-finite, then it is $\alpha^2$-repetitive.
\end{proposition}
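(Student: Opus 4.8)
The plan is to work entirely with the combinatorial characterisations already established: by \Cref{thm:G-alpha-free} the hypothesis is equivalent to its condition~(iii), and by \Cref{thm:G-alpha-repetative}, applied with the parameter $\alpha^{2}$, the desired conclusion is equivalent to the analogous $\limsup$ condition with $\alpha$ replaced by $\alpha^{2}$. Thus the whole statement reduces to the purely arithmetic implication: if
\begin{align*}
\limsup_{n \to \infty} \left\lvert l_{n+1} + (1 - \alpha) \sum_{i=1}^{n} l_{i} \right\rvert < \infty,
\end{align*}
then
\begin{align*}
\limsup_{n \to \infty} \left\lvert l_{n+2} + l_{n+1} + (1 - \alpha^{2}) \sum_{i=1}^{n} l_{i} \right\rvert < \infty.
\end{align*}
To lighten the notation I would write $S_{n} \coloneqq \sum_{i=1}^{n} l_{i}$.

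First I would unpack the hypothesis: finiteness of the $\limsup$ yields a constant $C > 0$ and an index $n_{0}$ such that $\lvert l_{n+1} - (\alpha - 1) S_{n} \rvert \leq C$ for every $n \geq n_{0}$; equivalently $l_{n+1} = (\alpha-1) S_{n} + \epsilon_{n}$ with $\lvert \epsilon_{n} \rvert \leq C$. The key step is then to telescope the partial sums: since $S_{n+1} = S_{n} + l_{n+1}$, the hypothesis immediately gives $S_{n+1} = \alpha S_{n} + \epsilon_{n}$, so the partial sums grow like $\alpha^{n}$ up to a uniformly bounded perturbation.

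With this in place the conclusion is a one-line substitution. Applying the hypothesis at index $n+1$ together with the telescoping identity gives
\begin{align*}
l_{n+2} = (\alpha-1) S_{n+1} + \epsilon_{n+1} = \alpha(\alpha-1) S_{n} + (\alpha-1)\epsilon_{n} + \epsilon_{n+1},
\end{align*}
and adding $l_{n+1} = (\alpha-1)S_{n} + \epsilon_{n}$ yields
\begin{align*}
l_{n+2} + l_{n+1} = \bigl( \alpha(\alpha-1) + (\alpha-1) \bigr) S_{n} + \alpha \epsilon_{n} + \epsilon_{n+1} = (\alpha^{2}-1) S_{n} + \alpha \epsilon_{n} + \epsilon_{n+1}.
\end{align*}
Hence $l_{n+2} + l_{n+1} + (1-\alpha^{2}) S_{n} = \alpha \epsilon_{n} + \epsilon_{n+1}$, which is bounded in absolute value by $(\alpha+1)C$ for all $n \geq n_{0}$. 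This establishes the required $\limsup$ bound, and by \Cref{thm:G-alpha-repetative} the subshift is $\alpha^{2}$-repetitive.

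Since the computation is short, the only genuine obstacle is careful bookkeeping of the error terms: one must confirm that multiplying a bounded error by the fixed factor $(\alpha-1)$ keeps it bounded (immediate, as $\alpha$ is fixed), and that the telescoping relation $S_{n+1} = \alpha S_{n} + \epsilon_{n}$ is used only once rather than iterated, so that no error accumulates. The argument is uniform in $\alpha \geq 1$; in particular, taking $\alpha = 1$ recovers the observation that a bounded sequence $l$ yields a $1$-repetitive subshift, so that case is automatically subsumed.
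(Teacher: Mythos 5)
Your proposal is correct and takes essentially the same route as the paper: both reduce the statement via \Cref{thm:G-alpha-free} and \Cref{thm:G-alpha-repetative} to the arithmetic implication between the two $\limsup$ conditions, and both exploit the relation $S_{n+1} = S_{n} + l_{n+1}$ to combine the hypothesis at indices $n$ and $n+1$. Your explicit error-term bookkeeping $l_{n+1} = (\alpha - 1)S_{n} + \epsilon_{n}$ is just a cleaner rendition of the paper's identity \eqref{eq:bounds_equivalence_1} together with its two-sided estimate, with the parameter $\delta$ there specialised to $\delta = \alpha$.
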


\begin{proof}
Observe that, for all $n \in \mathbb{N}$,
\begin{align}\label{eq:bounds_equivalence_1}
l_{n+2} + (1 - \alpha) \sum_{i = 1}^{n+1} l_{i}
= l_{n+2} + l_{n+1}+ \left(1 - \alpha \left( 1+\frac{l_{n+1}}{\sum_{i = 1}^{n} l_{i}} \right) \right)\sum_{i = 1}^{n} l_{i}.
\end{align}
By the hypothesis and \Cref{thm:G-alpha-free}, we have that $\limsup_{n \to \infty} \lvert l_{n+1} + (1 - \alpha) \sum_{i = 1}^{n} l_{i} \rvert$ is a finite real number. In the following, we denote this value by $c$.  Given $\epsilon >0$, there exists an $N \in \mathbb{N}$, such that, for all $n\geq N$,
\begin{align*}
-c - \epsilon \leq l_{n+1} + (1 - \alpha) \sum_{i = 1}^{n} l_{i} \leq c + \epsilon,
\quad \text{and hence,} \quad
\alpha - \frac{ c + \epsilon}{\sum_{i=1}^{n}l_i} \leq 1 + \frac{l_{n+1}}{\sum_{i=1}^{n}l_i} \leq \alpha + \frac{ c + \epsilon}{\sum_{i=1}^{n}l_i}.
\end{align*}
This in tandem with \Cref{eq:bounds_equivalence_1} yields for $\delta\geq 1$ that
\begin{align*}
-\delta(c+\epsilon) + l_{n+2} + (1 - \delta) \sum_{i = 1}^{n+1} l_{i} 
\leq l_{n+2} +l_{n+1} + (1 - \delta \alpha) \sum_{i = 1}^{n} l_{i} 
\leq \delta(c+\epsilon) + l_{n+2} + (1 - \delta) \sum_{i = 1}^{n+1} l_{i},
\end{align*}
for all $n\geq N$. This in combination with the hypothesis of the proposition and the \Cref{thm:G-alpha-free,thm:G-alpha-repetative} yields the required result.
\end{proof}

\subsection{Aperiodicity, Complexity and Ergodicity}\label{sec:section4.4}

We now turn to computing the value of the complexity function for a given $l$-Grigorchuk subshift.  Knowing the behaviour of the complexity function allows us to conclude that any $l$-Grigorchuk subshift is aperiodic and uniquely ergodic.  We note that in \cite{LenzSell:2016} an explicit formula for the complexity and the palindromic complexity functions have also been obtained independently.  The proof of the following theorem is a generalisation of that given in \cite{GLN:16,GLN:16b}, where the case when $l$ is the constant one sequence is considered.

In the sequel, for ease of notation, for $n \in \mathbb{N}_{0}$, we set $M(n)\coloneqq \lvert\tau^{(\sum_{i=1}^{n} l_i )} (a) \rvert=2^{1+\sum_{i=1}^{n} l_i }-1$.

\begin{theorem}\label{thm:complexity}
For $m \in \mathbb{N}_{0}$ and $0 \leq r < M({m+1}) - M(m)$, the $l$-Grigorchuk subshift satisfies,
\begin{align*}
p(M(m) + 1 + r) = 
\begin{cases}
2M(m)+M({m-1}) +3r & \mbox{if} \; 0 \leq r< M(m)- M({m-1}),\\
3M(m) +2r & \mbox{if} \; M(m) - M(m-1) \leq r < M(m+1) - M(m).
\end{cases} 
\end{align*} 
\end{theorem}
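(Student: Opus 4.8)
The plan is to compute the complexity function by counting factors through the right-special words, using the well-known relation
\begin{align*}
p(n+1) - p(n) = \sum_{\substack{w \in \mathcal{L}(\eta) \\ \lvert w \rvert = n}} \bigl( (\text{number of right extensions of } w) - 1 \bigr),
\end{align*}
so that the first difference of $p$ counts the ``excess'' extensions contributed by right special words. The abstract already signals the key structural fact I would establish first: for every length $n$ there are \emph{at least one and at most two} right special words, and each is $2$-right special (never $3$-right special). Given this, the first difference $p(n+1)-p(n)$ takes only the values $1$ or $2$, according to whether there are one or two right special words of length $n$. The entire theorem then reduces to determining, for each $n$, how many right special words of length $n$ exist, and the explicit piecewise formula is obtained by summing these differences from a known base value.

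First I would analyse the right special words directly from the self-similar structure \eqref{eq:tau_structure_eta}. Because $\eta$ is built by iterating the maps $\tau_x,\tau_y,\tau_z$, the letters following a factor $w$ are controlled by the images $\tau^{(j)}(a)$ and the intervening ``question mark'' letters $\beta^{(j)}$. I expect that the right special words are precisely (suitable suffixes of) the palindromic blocks $\tau^{(j)}(a)$ together with their cyclic/boundary modifications, since it is exactly at the boundaries of these blocks that more than one continuation becomes possible. The crucial counting input is $\lvert \tau^{(j)}(a) \rvert = 2^{j+1}-1$ from \Cref{prop:length_of_tau}, and the abbreviation $M(n) = 2^{1+\sum_{i=1}^n l_i}-1$ records the lengths $2^{j+1}-1$ at which a new ``level'' $j = \sum_{i=1}^n l_i$ begins. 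I would track, as a function of length, exactly when the number of right special words switches between one and two; the two regimes in the statement correspond to lengths in $[M(m)+1, 2M(m)+1)$ and in $[2M(m)+1, M(m+1)+1)$, i.e. before and after $r$ crosses $M(m)-M(m-1)$, reflecting whether the relevant block length permits one or two distinct right-special continuations.

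Next I would convert the count of right special words into the closed formula by summation. Writing $d(n) := p(n+1)-p(n) \in \{1,2\}$, the value of $p(M(m)+1+r)$ is obtained by adding $\sum d(n)$ over the appropriate range to the base value $p(M(m)+1)$. Here I would use an induction on $m$: the base case computes $p$ at small lengths directly, and the inductive step uses the substitution structure to relate factors of length $\sim 2^{j}$ to factors of length $\sim 2^{j-1}$, exploiting that $\tau^{(j)}$ is injective on the language and maps the language of $\eta^{(j)}$ into that of $\eta$. The two cases in the formula then arise because within a level the number of right-special words is $1$ for the first stretch of lengths (giving slope $2$ in $p$, since each step adds a base $1$ plus an extra from a single right special word beyond the minimal count — I would reconcile the exact constant via the base value) and $2$ for the remaining stretch (giving slope $3$), which matches the coefficients $+3r$ and $+2r$ in the two branches after accounting for the additive constants $2M(m)+M(m-1)$ and $3M(m)$.

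The main obstacle will be the careful bookkeeping in the first step: proving rigorously that there are never more than two right special words of a given length, that each is exactly $2$-right special, and identifying the precise length $2M(m)+1$ (equivalently $r = M(m)-M(m-1)$) at which the count drops from two to one. This requires a delicate case analysis of how the palindromic blocks $\tau^{(j)}(a)$ overlap and how the boundary letters $\beta^{(j)}$ differ from their neighbours — essentially the same palindrome-and-distinct-boundary-letter argument used in the proof of \Cref{lem:repetative_function_bounds}, where the inequalities $\beta^{(j)} \neq \beta^{(j + l_{N(j)} - q(j))}$ force factors to fail to extend ambiguously. Once the switch-point is pinned down, the summation and the piecewise formula follow by routine computation, and aperiodicity and unique ergodicity (\Cref{Cor:unique_ergodic}) follow from the linear-in-$M$ growth of $p$ together with the bounded number of right special words per length, via the Boshernitzan criterion.
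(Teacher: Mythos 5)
Your overall plan (count right special words and sum first differences of $p$) is the paper's plan too, but your central structural claim is false, and it is inconsistent with the very formula you are trying to prove. You assert that every right special word is $2$-right special, never $3$-right special, so that $p(n+1)-p(n)\in\{1,2\}$. However, the two branches of the theorem have slopes $3$ and $2$ (check continuity at $r=M(m)-M(m-1)$: both branches give $5M(m)-2M(m-1)$), and by the identity $p(n+1)-p(n)=\sum_{\lvert w\rvert = n}(s(w)-1)$, an increment of $3$ with at most two right special words per length forces one of them to be $3$-right special. This is exactly what the paper proves in \Cref{lem:lemma534}: $\tau^{(j)}(a)$ is $3$-right special for every $j\in\mathbb{N}_{0}$, and since $\tau^{(j)}(a)$ is a suffix of $\tau^{(j+1)}(a)$, every length carries a $3$-right special word (a suffix of some $\tau^{(j)}(a)$), contributing $2$ to every increment. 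The correct picture is: one $3$-right special word at every length, plus a second, merely $2$-right special word precisely at the lengths $M(m)+1+r$ with $0\le r<M(m)-M(m-1)$ --- namely the suffixes of the prefix $\tau^{(\sum_{i=1}^{m}l_i)}(a)\,\beta^{(\sum_{i=1}^{m-1}l_i)}\,\tau^{(\sum_{i=1}^{m-1}l_i)}(a)\cdots$ of length $2M(m)-M(m-1)$, which is $2$-right special but not a suffix of $\tau^{(\sum_{i=1}^{m+1}l_i)}(a)$. This yields increments $3$ then $2$; your scheme would instead produce slopes $1$ and $2$, and no amount of ``reconciling the exact constant via the base value'' can repair a wrong slope. (You also over-read the abstract: it promises at most two and at least one right special word per length, but says nothing about them being only $2$-right special.)

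A secondary point: the paper does not actually pin down the exact set of right special words and then sum, as you propose; it sandwiches. The upper bound $p(M(m)+1)\le 2M(m)+M(m-1)$ comes from covering all words of length $M(m)+1$ by the three words $\tau^{(\sum_{i=1}^{m}l_i)}(a)\,\gamma\,\tau^{(\sum_{i=1}^{m}l_i)}(a)$ with $\gamma\in\{x,y,z\}$, observing that the one with $\gamma=\beta^{(\sum_{i=1}^{m-1}l_i)}$ decomposes into $2^{l_m}$ smaller blocks and so contributes fewer new factors; the matching lower bound on $p(M(m+1)+1)-p(M(m)+1)$ comes from the two explicit special words above and their suffixes, and induction closes the gap. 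Your outline could be completed along the same lines once the $3$-right special words are restored, but as written the key counting input is wrong and the resulting difference values $\{1,2\}$ contradict the statement itself.
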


For the proof of this result we will use the following lemma.

\begin{lemma}
\label{lem:lemma534}
The factor $\tau^{(j)}(a)$ is $3$-special for every $j\in\mathbb{N}_0$.
\end{lemma}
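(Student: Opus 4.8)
The goal is to show that the factor $\tau^{(j)}(a)$ is $3$-special, meaning exactly three of the four letters of the alphabet $\{a, x, y, z\}$ can follow it as a legal continuation inside $\mathcal{L}(\eta)$. The plan is to exploit the self-similar structure of $\eta$ encoded in \eqref{eq:tau_structure_eta}: since $\eta$ is built by concatenating copies of $\tau^{(j)}(a)$ separated by single letters (the ``question marks''), every occurrence of $\tau^{(j)}(a)$ in $\eta$ is immediately followed by one of these separating letters. Thus the set $\{ c \in \mathscr{A} : \tau^{(j)}(a)\, c \in \mathcal{L}(\eta)\}$ is precisely the set of letters that appear as separators directly after a copy of $\tau^{(j)}(a)$.

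First I would identify which separators can occur. By the construction via \eqref{eq:tauj} and \eqref{eq:betaj}, the separators occurring after copies of $\tau^{(j)}(a)$ at the relevant scale are $\beta^{(j)}$ (the ``generic'' separator coming from the next application of $\tau_{\beta^{(j)}}$) together with the separators $\beta^{(j')}$ for the larger $j'$ at which a different letter is inserted. Concretely, by the palindrome structure and the recursion $\tau^{(j+1)}(a) = \tau^{(j)}(a)\,\beta^{(j)}\,\tau^{(j)}(a)$, a copy of $\tau^{(j)}(a)$ sits inside $\eta$ in several distinct contexts, and the letter following it is governed by the cyclic order $x \to y \to z \to x$ dictated by the sequence $l$. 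The key point is to show that among the three letters $x, y, z$, all three occur as right-continuations of $\tau^{(j)}(a)$ (since each of $\tau_x, \tau_y, \tau_z$ eventually inserts its letter after some copy of $\tau^{(j)}(a)$ as one ascends the levels), while $a$ never does, because every letter of $\eta$ at an even position is $a$ and $\tau^{(j)}(a)$ has odd length $2^{j+1}-1$ (\Cref{prop:length_of_tau}), forcing the letter after $\tau^{(j)}(a)$ to sit at an even position and hence not be $a$. This parity argument is exactly the mechanism already used to prove $(y,a,y)$ and $(z,a,z)$ are not factors in the proof of \Cref{thm:Q-Bounds}.

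The main steps, in order, would be: (1) use \eqref{eq:tau_structure_eta} to reduce the question of right-continuations of $\tau^{(j)}(a)$ to the question of which separator letters appear after copies of $\tau^{(j)}(a)$; (2) apply the parity/length argument from \Cref{prop:length_of_tau} to rule out $a$ as a right-continuation, giving at most three continuations; (3) show that all three of $x, y, z$ genuinely occur by producing, for each $\beta \in \{x,y,z\}$, an occurrence of $\tau^{(j)}(a)\,\beta$ in $\mathcal{L}(\eta)$ — this follows because the inserted letters cycle through $x, y, z$ as one moves to higher levels $\tau^{(j')}$, and $\tau^{(j)}(a)$ is a prefix and suffix of every $\tau^{(j')}(a)$ with $j' > j$, so each inserted letter $\beta^{(j')}$ appears immediately after a copy of $\tau^{(j)}(a)$; (4) conclude the cardinality of the continuation set is exactly $3$.

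I expect the main obstacle to be Step~(3): verifying that each of the three letters $x, y, z$ actually appears as a separator immediately adjacent to a copy of $\tau^{(j)}(a)$, rather than only adjacent to longer blocks $\tau^{(j')}(a)$ with $j' > j$. The subtlety is that at level $j$, the ``default'' separator is $\beta^{(j)}$, and one must argue that the other two letters still show up right after a copy of $\tau^{(j)}(a)$ and not merely after a copy of some $\tau^{(j')}(a)$ sharing $\tau^{(j)}(a)$ only as a suffix. Here one uses that $\tau^{(j)}(a)$ is a suffix of $\tau^{(j')}(a)$ (via the recursion and the palindrome property), so a separator following $\tau^{(j')}(a)$ also follows a copy of $\tau^{(j)}(a)$; combined with the fact that as $j'$ ranges over the appropriate levels the separators $\beta^{(j')}$ realize all three letters (by the cyclic structure and the fact that each $l_i \geq 1$), this closes the gap. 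The remaining steps are direct consequences of the structure \eqref{eq:tau_structure_eta} and \Cref{prop:length_of_tau} and should require only routine bookkeeping.
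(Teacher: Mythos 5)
Your proposal is correct and follows essentially the same route as the paper, whose entire proof is a one-line appeal to the structure \eqref{eq:tau_structure_eta}; you supply the two halves that appeal implicitly contains, namely a parity argument showing $a$ can never follow $\tau^{(j)}(a)$ (at most three right-extensions), and the observation that $\tau^{(j)}(a)$ is a suffix of every $\tau^{(j')}(a)$ with $j' \geq j$, so that each prefix $\tau^{(j')}(a)\,\beta^{(j')}$ of $\tau^{(j'+1)}(a) = \tau^{(j')}(a)\,\beta^{(j')}\,\tau^{(j')}(a)$ yields an occurrence of $\tau^{(j)}(a)\,\beta^{(j')}$ in $\mathcal{L}(\eta)$, where $\beta^{(j')}$ runs through all of $x, y, z$ as $N(j')$ cycles through the residues mod $3$ (hence at least, so exactly, three extensions). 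Two local misstatements should be repaired, though neither damages the argument. First, your opening claim that every occurrence of $\tau^{(j)}(a)$ in $\eta$ is block-aligned and hence followed by one of the separator letters is false: for instance, when $l_{1} \geq 2$ the word $\eta$ begins $(a,x,a,x,a,x,a,\dots)$ and $(a,x,a)$ occurs starting at position $3$, straddling a block boundary; fortunately your actual upper bound never uses alignment, only parity. Second, your parity sentence is internally inconsistent (``every letter of $\eta$ at an even position is $a$'' cannot be combined with ``the letter after sits at an even position and hence is not $a$''): in the paper's $1$-indexed convention the $a$'s occupy exactly the odd positions, so any occurrence of $\tau^{(j)}(a)$ must begin at an odd position, and since its length $2^{j+1}-1$ is odd (\Cref{prop:length_of_tau}), the successor letter sits at an even position and therefore lies in $\{x,y,z\}$.
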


\begin{proof}
This follows from the structure of $\eta$ given in \eqref{eq:tau_structure_eta}.
\end{proof}

\begin{proof}[{Proof of \Cref{thm:complexity}}]
For $m=1$, every word of length $\lvert \tau^{( l_{1} )}(a) \rvert + 1$ in $\mathcal{L}(\eta)$ is a factor of at least one of the following words belonging to $\mathcal{L}(\eta)$.
\begin{align*}
\tau^{(l_1) }(a) x \tau^{(l_1)}(a)& = (\underbrace{a, x, a, \dots, a, x, a}_{\tau^{(l_1) }(a)}, x, \underbrace{a, x, a, \dots, a, x, a}_{\tau^{(l_1)}(a)})\\
\tau^{(l_1)  }(a) y \tau^{(l_1)}(a)& = (\underbrace{a, x, a, \dots, a, x, a}_{\tau^{(l_1) }(a)}, y, \underbrace{a, x, a, \dots, a, x, a}_{\tau^{(l_1)}(a)})\\
\tau^{(l_1)  }(a) z \tau^{(l_1)}(a)& = (\underbrace{a, x, a, \dots, a, x, a}_{\tau^{(l_1) }(a)}, z, \underbrace{a, x, a, \dots, a, x, a}_{\tau^{(l_1)}(a)})
\end{align*}
This yields that $p( \lvert \tau^{(l_1) }(a) \rvert + 1)=2 \lvert \tau^{(l_1)}(a) \rvert + \lvert \tau^{(0)}(a) \rvert$.  In the same way, for a fixed $m \in \mathbb{N}$, every word of length $\lvert \tau^{(\sum_{i=1}^{m} l_i )}(a) \rvert + 1$ in $\mathcal{L}(\eta)$ is a factor of at least one of the following words
\begin{align*}
\tau^{(\sum_{i=1}^{m} l_i )}(a)  x  \tau^{(\sum_{i=1}^{m} l_i )}(a), \quad
\tau^{(\sum_{i=1}^{m} l_i )}(a)  y  \tau^{(\sum_{i=1}^{m} l_i )}(a) \quad \text{and} \quad
\tau^{(\sum_{i=1}^{m} l_i )}(a)  z  \tau^{(\sum_{i=1}^{m} l_i )}(a),
\end{align*}
which are all contained in $\mathcal{L}(\eta)$ by \eqref{eq:tau_structure_eta}. Additionally, we have 
\begin{align*}
\tau^{(\sum_{i=1}^{m} l_i )}(a)  \beta^{(\sum_{i=1}^{m- 1} l_i)}  \tau^{(\sum_{i=1}^{m} l_i )}(a)
= \tau^{(\sum_{i=1}^{m} l_i )}(a)\underbrace{
\underbrace{\beta^{(\sum_{i=1}^{m- 1} l_i)} \tau^{(\sum_{i=1}^{m-1} l_i )}(a)}
\cdots
\underbrace{\beta^{(\sum_{i=1}^{m- 1} l_i)}
\tau^{(\sum_{i=1}^{m-1} l_i )}(a)}}_{2^{l_{m}} - \text{times}}.
\end{align*}
With this we obtain that, for all $m\in\mathbb{N}$,
\begin{align}
\label{eq:mylab357}
p( \lvert \tau^{(\sum_{i=1}^{m} l_i )}(a) \rvert + 1) \leq 2 \lvert \tau^{(\sum_{i=1}^{m} l_i )}(a) \rvert +\lvert \tau^{(\sum_{i=1}^{m-1} l_i )}(a) \rvert.
\end{align}
By Lemma \ref{lem:lemma534} the factor $\tau^{(\sum_{i=1}^{m} l_i )}(a)$ is $3$-right special, for all $m\in\mathbb{N}$, and so
\begin{align*}
\tau^{(1+\sum_{i=1}^{m} l_i)}(a)=\tau^{(\sum_{i=1}^{m} l_i )}(a)\beta^{(\sum_{i=1}^{m} l_{i})}\tau^{(\sum_{i=1}^{m} l_i )}(a)
\end{align*}
is $3$-right special as it is a suffix of $\tau^{(\sum_{i=1}^{m+1} l_i )}(a)$.  Notice that 
\begin{align*}
\tau^{(\sum_{i=1}^{m} l_i )}(a)\beta^{(\sum_{i=1}^{m-1} l_{i})}\tau^{(\sum_{i=1}^{m} l_i )}(a),
\end{align*}
has the same length as $\tau^{(1+\sum_{i=1}^{m} l_i )}(a)$, but it is not right special because, by \eqref{eq:tau_structure_eta}, the only possible right-extension is 
\begin{align*}
\tau^{(\sum_{i=1}^{m} l_i )}(a)\beta^{(\sum_{i=1}^{m-1} l_{i})}\tau^{(\sum_{i=1}^{m} l_i )}(a)\beta^{(\sum_{i=1}^{m} l_{i})}.
\end{align*}
However, due to the structure of $\eta$ given in \Cref{Prop:Prop1} and \eqref{eq:tau_structure_eta}, the prefix
\begin{align*}
\tau^{(\sum_{i=1}^{m} l_i )}(a)\underbrace{
\underbrace{\beta^{(\sum_{i=1}^{m- 1} l_i)}\tau^{(\sum_{i=1}^{m-1} l_i )}(a)}
\cdots
\underbrace{\beta^{(\sum_{i=1}^{m- 1} l_i)}\tau^{(\sum_{i=1}^{m-1} l_i )}(a)}}_{(2^{l_{m}} - 1) - \text{times}},
\end{align*}
whose length is equal to $2 \lvert \tau^{(\sum_{i=1}^{m} l_i )}(a)\rvert - \lvert\tau^{(\sum_{i=1}^{m-1} l_i )}(a)\rvert$, is $2$-right special.  Further, it is not a suffix of $\tau^{(\sum_{i=1}^{m+1} l_i )}(a)$.  Using these right special words and their respective suffixes of length strictly greater than $\lvert \tau^{(\sum_{i=1}^{m} l_i )}(a) \rvert$ we obtain that 
\begin{align}\label{eq:mylab358}
\hspace{-0.35em}p(\lvert \tau^{(\sum_{i=1}^{m+1} l_i )}(a) \rvert + 1) - p( \lvert \tau^{(\sum_{i=1}^{m} l_i )}(a) \rvert + 1) \geq 2 \lvert \tau^{(\sum_{i=1}^{m+1} l_i )}(a) \rvert - \lvert \tau^{(\sum_{i=1}^{m} l_i )}(a) \rvert - \lvert \tau^{(\sum_{i=1}^{m-1} l_i )}(a) \rvert.
\end{align}
The result follows by combining $\eqref{eq:mylab357}$ and \eqref{eq:mylab358} together with an inductive argument.
\end{proof}

\begin{corollary}\label{cor:Gri-aperiodic}
Every $l$-Grigorchuk subshift is aperiodic.
\end{corollary}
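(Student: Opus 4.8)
The plan is to read off aperiodicity directly from the explicit complexity formula established in \Cref{thm:complexity}, via the Morse--Hedlund dichotomy: an infinite word over a finite alphabet is eventually periodic if and only if its factor complexity is bounded. Since the language of the subshift coincides with the language of the generating word $\eta$, the complexity function $p$ of the $l$-Grigorchuk subshift is precisely the factor complexity of $\eta$, and so it suffices to exhibit that $p$ is unbounded.

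First I would observe that, as each $l_{i} \geq 1$, the quantity $M(m) = 2^{1 + \sum_{i=1}^{m} l_{i}} - 1$ is strictly increasing in $m$ and tends to infinity. Evaluating the formula of \Cref{thm:complexity} at $r = 0$ then gives
\begin{align*}
p(M(m) + 1) = 2M(m) + M(m-1),
\end{align*}
which tends to infinity as $m \to \infty$. Hence $p$ is unbounded, and so $\eta$ is not eventually periodic.

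The remaining, and only slightly delicate, step is to pass from non-eventual-periodicity of $\eta$ to aperiodicity of the whole subshift $\Omega(\eta)$, that is, to the absence of any periodic point. Here I would invoke minimality (\Cref{prop:minimality}): were some $y \in \Omega(\eta)$ to satisfy $\sigma^{k}(y) = y$ for some $k \in \mathbb{N}$, then $\Omega(y)$ would be a finite periodic orbit, and by minimality $\Omega(\eta) = \Omega(y)$ would be finite, forcing $p$ to be bounded and contradicting the previous paragraph. I expect no genuine obstacle: once the explicit complexity formula is in hand, the only point requiring care is this logical passage, which minimality resolves cleanly. Alternatively, one checks from the same formula that $p$ is strictly increasing (the slopes in $r$ are positive and the values match across the block boundaries $r = M(m)-M(m-1)$ and $n = M(m+1)$), so that $p(n) \geq n+1$ for every $n$, which is another standard characterisation of aperiodicity.
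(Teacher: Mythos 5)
Your proposal is correct and follows essentially the same route as the paper: the paper's proof likewise combines minimality (\Cref{prop:minimality}) with the unboundedness of the complexity function from \Cref{thm:complexity} to rule out a periodic point, exactly as in your minimality argument. Your additional details (evaluating $p(M(m)+1) = 2M(m) + M(m-1) \to \infty$ and the Morse--Hedlund framing) merely make explicit what the paper leaves implicit.
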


\begin{proof}
 By \Cref{prop:minimality} we know that every $l$-Grigorchuk subshift is minimal.  Therefore, if an $l$-Grigorchuk subshift was not aperiodic, then its complexity function would be bounded, contradicting \Cref{thm:complexity}.
\end{proof}

\begin{corollary}\label{Cor:unique_ergodic}
Every $l$-Grigorchuk subshift is uniquely ergodic.
\end{corollary}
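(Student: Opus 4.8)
The plan is to deduce unique ergodicity from the low complexity established in \Cref{thm:complexity} together with the minimality proved in \Cref{prop:minimality}, by appealing to the following instance of Boshernitzan's theorem: a minimal subshift whose complexity function $p$ satisfies $\liminf_{n\to\infty} p(n)/n < 3$ is uniquely ergodic. Since every $l$-Grigorchuk subshift is minimal, it therefore suffices to exhibit a sequence of lengths along which $p(n)/n$ stays bounded below $3$.

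First I would record the behaviour of the first difference of $p$. Writing $n = M(m)+1+r$ and reading off \Cref{thm:complexity}, one sees that $p(n+1)-p(n) = 3$ when $0 \le r < M(m)-M(m-1)$ and $p(n+1)-p(n) = 2$ when $M(m)-M(m-1) \le r < M(m+1)-M(m)$. Thus the first difference is bounded, always equal to $2$ or $3$. This both confirms the assertion that there are at most two right special words of each length -- consistent with \Cref{lem:lemma534}, since the excess degree of the $3$-special factor $\tau^{(j)}(a)$ already accounts for two of these units -- and shows that the complexity grows at most linearly.

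Next I would estimate $p$ along the subsequence $n_m = M(m+1)$, that is, at the right end of the $m$-th block. Here $r = M(m+1)-M(m)-1$ lies in the second regime, so \Cref{thm:complexity} gives $p(M(m+1)) = M(m) + 2M(m+1) - 2$, whence $p(M(m+1))/M(m+1) = 2 + (M(m)-2)/M(m+1)$. Using the relation $M(m+1) = 2^{l_{m+1}}M(m) + 2^{l_{m+1}} - 1$, which follows directly from the definition $M(n)=2^{1+\sum_{i=1}^{n} l_i}-1$, a short computation yields $(M(m)-2)/M(m+1) < 2^{-l_{m+1}} \le 1/2$, so that $p(M(m+1))/M(m+1) < 5/2$ for every $m$. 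Consequently $\liminf_{n\to\infty} p(n)/n \le 5/2 < 3$, and Boshernitzan's criterion delivers unique ergodicity.

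The main obstacle is not the complexity arithmetic, which is routine once \Cref{thm:complexity} is in hand, but rather pinning down the correct form of the criterion one invokes: the ratio $p(n)/n$ oscillates (it climbs to roughly $5/2$ inside each block and dips back toward $2$ at the block boundaries), so one must extract the appropriate subsequence and use a statement phrased with a $\liminf$ rather than a $\limsup$. Should one prefer a self-contained argument avoiding an external citation, the alternative -- following \cite{GLN:16,GLN:16b} -- would be to construct the unique invariant measure directly from the hierarchical structure of the $\tau^{(j)}$, proving that the frequency of each factor exists uniformly in the starting point; this is more laborious but relies only on the fixed-point property exploited above.
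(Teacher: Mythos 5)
Your block arithmetic is correct (the first differences $p(n+1)-p(n)\in\{2,3\}$, the value $p(M(m+1)) = 2M(m+1)+M(m)-2$, and the bound $p(M(m+1))/M(m+1) < 5/2$ all check out), but the criterion you hang the proof on is misquoted, and the error sits exactly at the load-bearing step. Boshernitzan's $\liminf$-type theorem in \cite{B:1984/85} states that a minimal subshift with $\liminf_{n\to\infty} p(n)/n < k$ carries \emph{at most $k-1$ ergodic measures}; there is no theorem asserting unique ergodicity from $\liminf_{n\to\infty} p(n)/n < 3$. So your estimate $\liminf_{n\to\infty} p(n)/n \leq 5/2 < 3$ only yields at most \emph{two} ergodic measures. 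To get unique ergodicity from the $\liminf$ version you would need $\liminf_{n\to\infty} p(n)/n < 2$, and that is unattainable here: both branches of \Cref{thm:complexity} give $p(n) \geq 2n$ for all sufficiently large $n$ (in the second regime $p(n) = 2n + M(m)-2$, and in the first $p(n) = 3n - (M(m)-M(m-1)+3)$ with $n \geq M(m)+1$), so $\liminf_{n\to\infty} p(n)/n \geq 2$ and no choice of subsequence can rescue the argument as stated.

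The repair is simple, because the oscillation that pushed you toward a $\liminf$ statement never actually reaches $3$: in the first regime $p(n)/n = 3 - (M(m)-M(m-1)+3)/n$ with $n \leq 2M(m)-M(m-1)$, and since $M(m-1) \leq (M(m)-1)/2$ one gets $(M(m)-M(m-1))/(2M(m)-M(m-1)) \geq 1/3$, whence $p(n)/n \leq 8/3$; in the second regime $p(n)/n = 2 + (M(m)-2)/n \leq 8/3$ as well, since there $n \geq 2M(m)-M(m-1)+1$. Hence $\limsup_{n\to\infty} p(n)/n \leq 8/3 < 3$ uniformly in $l$, and the correct, $\limsup$-form of Boshernitzan's criterion (a minimal subshift with $\limsup_{n\to\infty} p(n)/n < 3$ is uniquely ergodic) applies directly. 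This is in substance what the paper does: it invokes \cite[Theorem 2.2]{B:1984/85} (with $\alpha = 4$ and $k=1$), together with \Cref{lem:lemma534} and \Cref{thm:complexity}. One further point the paper handles and you omit: Boshernitzan's results are formulated for two-sided symbolic flows, so the paper first passes to the two-sided subshift $\Omega'(\eta)$, obtains unique ergodicity there, and then transfers it to $(\Omega(\eta),\sigma)$ via the factor map $\pi(\dots,x_{-1},x_0,x_1,\dots)=(x_1,x_2,\dots)$ and the Birkhoff-average characterisation of unique ergodicity; a complete write-up of your argument needs this step (or an explicit one-sided version of the criterion). Your proposed fallback of constructing the invariant measure directly from the $\tau^{(j)}$-hierarchy would indeed avoid the citation, but as presented it is only a sketch.
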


\begin{proof}
Given an $l$-Grigorchuk subshift $\Omega(\eta)$ we define the associated two-sided subshift $\Omega'(\eta)$ by $\Omega'(\eta) \coloneqq \{ \omega \in \{ a, x, y, z \}^{\mathbb{Z}} \colon \mathcal{L}(\omega) \subseteq \mathcal{L}(\eta) \}$.  Here $\{ a, x, y, z \}^{\mathbb{Z}}$ denotes the set of all bi-infinite words over the alphabet $\{ a, x, y, z \}$ equipped with the discrete product topology.  Since $\eta$ is uniformly recurrent (see \Cref{prop:minimality}), we have that $\Omega'(\eta)$ is minimal.  (For the latter result, see for instance \cite{Combinatorics:2010}.)  The existence of an invariant measure supported on $\Omega'(\eta)$ is guaranteed by \cite{B:1984/85}.  By \Cref{lem:lemma534,thm:complexity} and \cite[Theorem 2.2]{B:1984/85}, where in this latter result we set $\alpha = 4$ and $k = 1$, it follows that $\Omega'(\eta)$ has at most one ergodic measure $\mu$. Therefore, $(\Omega'(\eta),\sigma)$ is a uniquely ergodic dynamical system. Since as a dynamical system,  $(\Omega(\eta),\sigma)$ is a topological factor of $(\Omega'(\eta),\sigma)$ via the factor map $\pi \colon \Omega'(\eta)\to \Omega(\eta)$ given by $\pi(\dots, x_{-2}, x_{-1}, x_{0}, x_{1}, x_{2}, \dots) = (x_{1}, x_{2}, \dots)$, it follows that also $(\Omega(\eta),\sigma)$ is uniquely ergodic. To see this fix a continuous function $f\colon\Omega(\eta)\to \mathbb{R}$ and $x\in \Omega(\eta)$. Then there exists $y\in \Omega'(\eta)$ with $x=\pi(y)$ and we have
\[
\lim_{n\to\infty}\frac{1}{n}\sum_{k=0}^{n-1}f\circ \sigma^{k}(x)=
\lim_{n\to\infty}\frac{1}{n}\sum_{k=0}^{n-1}f\circ \sigma^{k}\circ \pi(y)=
\lim_{n\to\infty}\frac{1}{n}\sum_{k=0}^{n-1}f\circ  \pi\circ\sigma^{k}(y)=\int f \;\mathrm{d}\mu\circ\pi^{-1}.
\]
This characterises  unique ergodicity as stated e.g. in  \cite[Theorem 6.19]{W:2013}. 
\end{proof}

Alternatively, one can show that any $l$-Grigorchuk subshift is a regular Toeplitz subshift, and so it is uniquely ergodic, see \cite{PK:2003}.

\begin{remark}\label{rmk:last_remark}
In most sections of this article, we assumed that $l_i\ne 0$ for all $i\in\mathbb{N}$. We believe that all of our results hold under slightly weaker assumptions, namely that if $l_{i} = 0$, for some index $i$, then $l_{i-1}$ and $l_{i+1}$ are non-zero, and the homomorphisms $\tau_x$, $\tau_y$ and $\tau_z$ all occur infinitely often in the construction of $\eta$.
\end{remark}


\begin{thebibliography}{99}

\bibitem{AR:1991}
P.\ Arnoux, G.\ Rauzy.
Repr\'esentation g\'eom\'etrique de suites de complexit\'e $2n+1$.
Bull.\ Soc.\ Math.\ France (2) \textbf{119} (1991) 199--215. 

\bibitem{BG:2013}
M.\ Baake, U.\ Grimm.
Aperiodic order: A mathematical invitation. Vol.\ 1.
Encyclopedia of Mathematics and its Applications \textbf{149} Cambridge Univ. Press (2013).

\bibitem{BaMo:2000}
M.\ Baake, R.\ V.\ Moody (eds).
Directions in mathematical quasicrystals.
CRM Monogr.\ Ser.\ \textbf{13} Amer.\ Math.\ Soc. (2000).

\bibitem{B:2016}
V. Berth\'e.
S-adic expansions related to continued fractions.
RIMS Kyokuroku Bessatsu (2016).

\bibitem{B:2014}
V.\ Berth\'e, V.\ Delecroix.
Beyond substitutive dynamical systems: S-adic expansions.
RIMS Lecture note `Kokyuroku Bessatu' \textbf{B46} (2014) 81--123.

\bibitem{Combinatorics:2010}
V.\ Berthe, M.\ Rigo.
Combinatorics, Automata and Number Theory (1st ed.).
Cambridge University Press, New York, USA (2010).

\bibitem{Bon201567}
M.\ N.\ Bon.
Topological full groups of minimal subshifts with subgroups of intermediate growth.
Journal of Modern Dynamics \textbf{9} (2015) 67--80.

\bibitem{B:1984/85}
M.\ Boshernitzan.
A unique ergodicity of minimal symbolic flows with linear block growth.
Anal. Math.\ \textbf{44} (1984/85) 77--96.

\bibitem{CG:95}
J.\ H.\ Conway, R.\ Guy
The Book of Numbers.
Copernicus (1995).

\bibitem{DL:2006}
D.\ Damanik, D.\ Lenz.
Substitution dynamical systems: Characterization of linear repetitivity and applications
J.\ Math.\ Anal.\ Appl.\ \textbf{321} (2006) 766--780

\bibitem{DL:2002}
D.\ Damanik, D.\ Lenz.
The Index of Sturmian Sequences.
European J.\ Combin. (1) \textbf{23} (2002) 23--29.

\bibitem{Day:1957}
M.\ M.\ Day.
Amenable semigroups.
Illinois J.\ Math.\ \textbf{1} 509--544 (1957).

\bibitem{D:2000}
F.\ Durand.
Linearly recurrent subshifts have a finite number of non-periodic subshift factors.
Ergod.\ Th.\ \& Dynam.\ Sys.\ \textbf{20} (2000) 1061--1078.

\bibitem{DuHo:1999}
F.\ Durand, F.\ Host, C.\ Skau.
Substitution dynamical systems, Bratteli diagrams and dimension groups.
Ergod.\ Th.\ Dynam.\ Sys.\ \textbf{19} (1999) 953--993.

\bibitem{F:2002}
N.\ P.\ Fogg.
Substitutions in dynamics, arithmetics and combinatorics.
Edited by V.~Berth{\'e}, S.~Ferenczi, C.~Mauduit and A.~Siegel
Springer Lecture Notes in Mathematics \textbf{1794} (2002).

\bibitem{BFMS:2002}
N.\ P.\ Fogg, V.\ Berth\'e, S.\ Ferenczi,\ C. Mauduit, A.\ Siegel (eds).
Substitutions in dynamics, arithmetics and combinatorics.
Lecture Notes in Mathematics \textbf{1794} (2002). 

\bibitem{FGJ:2015}
G.\ Fuhrmann, M.\ Gr\"oger, T. J\"ager.
Amorphic complexity.
To appear in Nonlinearity.

\bibitem{GRIGO:1984}
R.\ I.\ Grigorchuk.
On Burnside's problem on periodic groups.
Funktsional.\ Anal.\ i Prilozhen.\ \textbf{14} 53--54 (1980).

\bibitem{G:84b}
R.\ Grigorchuk.
On the Milnor problem of group growth.
Soviet Math.\ Dokl.\ (1) \textbf{28} 23--26 (1983).

\bibitem{G:84a}
R.\ Grigorchuk.
Degrees of growth of finitely generated groups and the theory of invariant means.
Izv.\ Akad.\ Nauk SSSR Ser.\ Mat.\ \textbf{48} 939--985 (1984).

\bibitem{GLN:16}
R.\ Grigorchuk, D.\ Lenz, T.\ Nagnibeda.
Schreier graphs of Grigorchuk's group and a substitution associated to a non-primitive subshift.
Preprint 2016: arXiv:1510.00545.

\bibitem{GLN:16b}
R.\ Grigorchuk, D.\ Lenz, T.\ Nagnibeda.
Spectra of Schreier graphs of Grigorchuk's group and Schroedinger operators with aperiodic order.
Preprint 2016: arXiv:1412.6822.

\bibitem{GKMSS16}
M.\ Gr\"oger, M.\ Kesseb\"ohmer, A.\ Mosbach, T.\ Samuel, M.\ Steffens.
A classification of aperiodic order via spectral metrics and Jarn\'ik sets.
Preprint 2016: arXiv:1601.06435.

\bibitem{PK:2003}
P.\ K\r{u}rka.
Topological and Symbolic Dynamics.
Soci\'et\'e Math\'ematique de France (2003).

\bibitem{HKW:2015}
A. Haynes, H. Koivusalo, J. Walton.
A characterization of linearly repetitive cut and project sets.
Preprint 2015: arXiv:1503.04091.

\bibitem{MH:1940}
G.\ A.\ Hedlund, M.\ Morse.
Symbolic dynamics II: Sturmian trajectories.
Amer.\ J.\ Math.\ \textbf{62} (1940) 1--42.

\bibitem{INF:1985}
T.\ Ishimasa, H.\ U.\ Nissen, Y.\ Fukano.
New ordered state between crystalline and amorphous in Ni-Cr particles.
Phys.\ Rev.\ Lett.\ (5) \textbf{55} (1985) 511--513.

\bibitem{KLS:2011}
J.\ Kellendonk, D.\ Lenz, J.\ Savinien.
A characterization of subshifts with bounded powers
Discrete Math.\ (24) \textbf{313} (2013) 2881--2894.

\bibitem{KS:2012}
J.\ Kellendonk, J.\ Savinien.
Spectral triples and characterization of aperiodic order.
Proc.\ Lond.\ Math.\ Soc.\ \textbf{104} (2012) 123--157.

\bibitem{Lag:1999}
J.\ C.\ Lagarias.
Geometric models for quasicrystals I. Delone sets of finite type.
Discrete Comput.\ Geom.\ (29) \textbf{21} (1999) 161--191.

\bibitem{Lag:2002}
J.\ C.\ Lagarias, P.\ A.\ B.\ Pleasants.
Local Complexity of {De}lone sets and crystallinity.
Canad.\ Math.\ Bull.\ (4) \textbf{45} (2002) 634--652.

\bibitem{Lag:2003}
J.\ C.\ Lagarias, P.\ A.\ B.\ Pleasants.
Repetitive Delone sets and quasicrystals.
Ergodic Theory Dynam.\ Systems (3) \textbf{23} (2003) 831--867.

\bibitem{LenzSell:2016}
D.\ Lenz, D.\ Sell.
Private communication. (2016).

\bibitem{L:85}
I.\ G.\ Lysenok.
A set of defining relations for the Grigorchuk group.
(Russian), Mat. Zametki \textbf{38} (1985) 503--516.
English translation: Math.\ Notes \textbf{38} (1985) 784--792.

\bibitem{M:1968}
J.\ Milnor.
Problem 5603.
Amer.\ Math.\ Monthly \textbf{75} 685--686 (1968).

\bibitem{Mo:1997}
R.\ V.\ Moody (ed).
The mathematics of long-range aperiodic order.
Proceedings of the NATO Advanced Study Institute held in Waterloo, 403--441, NATO Adv.\ Sci.\ Inst.\ Ser.\ C Math.\ Phys.\ Sci. \textbf{489} Kluwer Academic Publishers Group (1997). 

\bibitem{P:1998}
J. Patera (ed).
Quasicrystals and discrete geometry.
Proceedings of the Fall Programme held at the University of Toronto. Fields Inst.\ Monogr.\ \textbf{10} Amer.\ Math.\ Soc.\ (1998).

\bibitem{S:2015}
J.\ Savinien.
A metric characterisation of repulsive tilings.
Discrete Comput.\ Geom.\ (3) \textbf{54} (2015) 705--716. 

\bibitem{SBGC:1984}
D.\ Shechtman, I.\ Blech, D.\ Gratias, J.\ W.\ Cahn.
Metallic phase with long-range orientational order and no translational symmetry.
Phys.\ Rev.\ Lett.\ (20) \textbf{53} (1984) 1951--1953.

\bibitem{Sol:1998}
B.\ Solomyak.
Nonperiodicity implies unique composition for self-similar translationally finite tilings.
Discrete Comput.\ Geom.\ \textbf{20} (1998) 265--279.

\bibitem{W:2013}
P.\ Walters.
An Introduction to Ergodic Theory.
Springer Graduate Texts in Mathematics (2013).

\end{thebibliography}
\end{document}